\newtheorem{theorem}{Theorem}[section]
\newtheorem{lemma}{Lemma}[section]
\newtheorem{definition}{Definition}[section]
\newtheorem{corollary}{Corollary}[section]
\numberwithin{equation}{section}
\begin{document}

\title
{Commutator Theorems for  Fractional Integral Operators on Weighted Morrey Spaces}

\author{Zengyan Si }
\address{Zengyan Si\\
School of Mathematics and Information Science\\
Henan Polytechnic University\\
Jiaozuo 454000 \\
P. R. China
}

\email{sizengyan@yahoo.cn}

\begin{abstract}
Let $L$ be the infinitesimal generator of an analytic semigroup on $L^2(R^n)$ with Gaussican kernel bounds, and let $L^{-\alpha/2}$ be the fractional integrals of $L$ for $0<\alpha<n.$  For any locally integrable function  $b$,  The commutators associated with  $L^{-\alpha/2}$ are defined by $[b,L^{-\alpha/2}](f)(x)=b(x)L^{-\alpha/2}(f)(x)-L^{-\alpha/2}(bf)(x)$.
When $b\in BMO(\omega)$(weighted $BMO$ space) or  $b\in BMO$, the author obtain the necessary and sufficient conditions for the boundedness of  $[b,L^{-\alpha/2}]$ on weighted Morrey spaces respectively.
 \end{abstract}

%
%


\subjclass[2000]{Primary  42B20, 42B25. Secondary 46B70, 47B38.}

\keywords{ weighted $BMO$ spaces, weighted
Morrey spaces,  fractional integrals.}
 \maketitle

\section{INTRODUCTION AND MAIN RESULTS}
Morrey \cite{morrey} introduced the classical Morrey spaces to investigate the local behavior of solutions to second order elliptic partial differential equations. Chiarenza and Frasca \cite{chiarenzafrasca} established the boundedness of the Hardy-Littlewood maximal operator, the fractional operator and a  singular integral operator on the Morrey spaces. On the other hand, Coifman and Fefferman \cite{coifman}, Muckenhoupt \cite{muckenhoupt} studied the boundedness of these operator on weighted $L^p$ spaces. Motivated by these work, Komori and Shirai \cite{ks} introduced the following weighted Morrey space and investigated the boundedness of classical
operators in harmonic analysis, that is, the Hardy-Littlewood maximal operator, a Calder\'{o}n-Zygmund operator,
the fractional integral operator, etc.
\par
 Let $1\leq p<\infty$ and $0\leq k<1$. Then for two weights $\mu$ and $\nu$, the weighted Morrey space is defined by
\begin{equation*}
    L^{p,k}(\mu,\nu)=\{f\in L_{loc}^p(\mu): ||f||_{L^{p,k}(\mu,\nu)}<\infty \},
\end{equation*}
where
\begin{equation*}\label{def}
   ||f||_{L^{p,k}(\mu,\nu)}=\sup_Q\left( \frac{1}{\nu(Q)^k}\int_Q |f(x)|^p\mu(x)dx\right)^{\frac{1}{p}}.
\end{equation*}
and the supremum is taken over all cubes $Q$ in $R^n$.
\par If $\mu=\nu,$ then we have the classical Morrey space $L^{p,k}(\mu)$ with measure $\mu$.
When $k=0,$ then $L^{p,k}(\mu,\nu)=L^{p}(\mu)$ is the Lebesgue space with measure $\mu$.
\par
Suppose that $L$ is a linear operator on $L^2 (R^n)$ which generates an analytic semigroup
$e^{-tL}$ with a kernel $p_t(x, y)$ satisfying a Gaussian upper bound, that is,
\begin{equation}\label{1.1}
   |p_t(x,y)|\leq \frac{C}{t^{\frac{n}{2}}}e^{-c\frac{|x-y|^2}{t}}
\end{equation}
for $x, y\in R^n$ and all $t>0.$
\par
For $0<\alpha<n, $ the fractional integral $L^{-\alpha/2}$ of the operator $L$ is defined by
$$L^{-\alpha/2}f(x)=\frac{1}{\Gamma(\frac{\alpha}{2})}\int_0^\infty e^{-tL}(f)\frac{dt}{t^{-\alpha/2+1}}(x).$$
Note that if $L = -\Delta$ is the Laplacian on $R^n$,
then $L^{-\alpha/2}$ is the classical fractional
integral $I_\alpha$ which plays important roles in many fields. It is well known that
$I_\alpha  $ is bounded from $L^p(R^n)$ to $L^q(R^n)$
 for all $p>1, 1/q=1/p-\alpha/n>0$ and is also of weak type $(1, n/(n-\alpha)).$
\par
Let $1\leq p<\infty$ and $\omega$ be a weight function.  A locally integral function $b$ is said to be in $BMO_p(\omega)$ if
 \begin{equation*}
  ||b||_{BMO_p(\omega)}=\sup_Q\left( \frac{1}{\omega(Q)}\int_Q |b(x)-b_Q|^p\omega(x)^{1-p}dx\right)^{\frac{1}{p}}\leq C< \infty,
\end{equation*}
 where $b_Q=\frac{1}{|Q|}\int_Q b(y)dy$ and the supremum is taken over all cube $Q\in R^n.$
 \par
 Let  $\omega \in A_1$, Garc\'{i}a-Cuerva \cite{garcia}  proved that the spaces $BMO_p(\omega)$ coincide, and the norm of $||\cdot||_{BMO_p(\omega)}$ are equivalent with respect to different values of provided that $1\leq p<\infty.$
\par
Let $b$ be a locally integrable function on $R^n$, we consider the commutator $[b,L^{-\alpha/2}]$ defined by
$$[b,L^{-\alpha/2}](f)(x)=b(x)L^{-\alpha/2}(f)(x)-L^{-\alpha/2}(bf)(x).$$
\par  Chanillo \cite{chanillo}  proved that  the commutator $[b, I_\alpha]$ of the multiplication operator by $b\in BMO$ is bounded on $L^p$ for $1<p<\infty.$
\par Duong and Yan \cite{duongyan} proved $[b,L^{-\alpha/2}]$ is bounded from $L^p$ to $L^q, $ where $b\in BMO,1<p<n/\alpha, 1/q=1/p-\alpha/n,0<\alpha <n.$
\par Mo and Lu \cite{molu} proved the multilinear commutator generated by $\vec{b}$ and $L^{-\alpha/2}$ is bounded from $L^p$ to $L^q, $ where $1<p<n/\alpha, 1/q=1/p-\alpha/n,0<\alpha <1$,  $\vec{b}=(b_1,\cdots,b_m), b_i\in BMO,$ for $i=1,\cdots,m.$

\par Lu, Ding and Yan \cite{ludingyan} proved $[b,I_\alpha]$ is bounded from $L^p$ to $L^q$ if and only if $b\in BMO.$

\par
  Wang \cite{wang} proved that $[b,I_\alpha]$ is bounded from $L^{p,k}(\omega)$ to $L^{q,kq/p}(\omega^{1-(1-\alpha/n)q},\omega),$ where
  $b\in BMO(\omega)$, $0<\alpha<n, 1<p<n/\alpha, 1/q=1/p-\alpha/n, 0<k<p/q$ and $\omega^{q/p}\in A_1.$

  Inspired by the above results, we study the boundedness properties of the commutator $[b,L^{-\alpha/2}]$ on weighted Morrey spaces  in this work.
The main theorems are stated as follows.
\begin{theorem}\label{thm1.1}
Let $0<\alpha<n, 1<p<n/\alpha, 1/q=1/p-\alpha/n,$
$0\leq k<p/q,$ $\omega^{\frac{q}{p}}\in A_1$ and $ r_\omega> \frac{1-k}{p/q-k},$  where $r_\omega$ denotes the critical index of $\omega$ for the reverse H\"{o}lder condition. Then the following conditions are equivalent:
\par $(a)$  $b\in BMO(\omega).$
\par
 \par $(b)$ $[b,L^{-\alpha/2}]$ is bounded from $L^{p,k}(\omega)$ to $L^{q,kq/p}(\omega^{1-(1-\alpha/n)q},\omega)$.
\end{theorem}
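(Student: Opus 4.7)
The strategy is to establish the two implications separately. For the forward direction (a) $\Rightarrow$ (b), I would rely on the pointwise kernel bound $|K_\alpha(x,y)| \leq C|x-y|^{\alpha-n}$ on the kernel of $L^{-\alpha/2}$, which follows from integrating the Gaussian upper bound \eqref{1.1} in $t$, and then reduce to estimates of $I_\alpha$-type. For the reverse direction (b) $\Rightarrow$ (a), I would adapt the classical Janson-type test-function argument to the weighted-Morrey setting.

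For (a) $\Rightarrow$ (b), fix a cube $Q$ and write the standard commutator decomposition
\[
[b,L^{-\alpha/2}]f(x) = (b(x)-b_Q)\, L^{-\alpha/2}f(x) - L^{-\alpha/2}\bigl((b-b_Q)f\bigr)(x).
\]
Split $f = f\chi_{2Q} + f\chi_{(2Q)^c} =: f_1+f_2$ and estimate four terms. For the first term with $f_1$, Hölder's inequality in $(q,q')$ plus the $L^p(\omega^{q/p}) \to L^q(\omega^{q/p(1-\alpha q/n)})$ boundedness of $L^{-\alpha/2}$ (inherited from $I_\alpha$ via the kernel bound, using $\omega^{q/p}\in A_1$) controls the piece in terms of $\|f\|_{L^{p,k}(\omega)}\omega(Q)^{k/p}$, where the $BMO(\omega)$ norm enters through the John-Nirenberg-type inequality $\bigl(\tfrac{1}{\omega(Q)}\int_Q|b-b_Q|^s\omega^{1-s}\bigr)^{1/s}\lesssim \|b\|_{BMO(\omega)}$ from Garc\'{\i}a-Cuerva. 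For the piece involving $f_2$, I would decompose $(2Q)^c = \bigcup_{j\geq 1} 2^{j+1}Q\setminus 2^jQ$, bound the kernel by $C(2^j\ell(Q))^{\alpha-n}$ on each annulus, and sum the geometric-like series using the reverse H\"older exponent $r_\omega$ and the hypothesis $r_\omega > (1-k)/(p/q-k)$, which exactly provides the decay needed for the summation to converge. The terms with $(b-b_Q)f_2$ are handled similarly, with an extra factor of $|b_{2^jQ}-b_Q|\lesssim j\|b\|_{BMO(\omega)}\,\omega(2^jQ)/|2^jQ|$ absorbed into the same geometric sum.

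For (b) $\Rightarrow$ (a), I would use a Janson-type trick: for a cube $Q$ centered at $x_0$ with sidelength $r$, choose a companion cube $Q^* = Q_0 + 3r e_1$ (for instance) of the same side so that for $x\in Q$, $y\in Q^*$ the kernel $K_\alpha(x,y)$ of $L^{-\alpha/2}$ is essentially the constant $c\,r^{\alpha-n}$ (up to bounded factors, using the Gaussian bound and the resulting two-sided control on $K_\alpha$ over well-separated pairs). Taking $f = \chi_{Q^*}$ gives $L^{-\alpha/2}f(x) \approx c|Q|^{\alpha/n}$ on $Q$, whence
\[
\bigl|[b,L^{-\alpha/2}]f(x)\bigr| \gtrsim |Q|^{\alpha/n}\,|b(x)-c_Q|\qquad(x\in Q),
\]
for a suitable constant $c_Q$. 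Applying the assumed $L^{p,k}(\omega)\to L^{q,kq/p}(\omega^{1-(1-\alpha/n)q},\omega)$ bound to this $f$ and computing both sides against the appropriate weights yields, after simplification,
\[
\frac{1}{\omega(Q)}\int_Q|b-c_Q|\,\omega(x)^{1-p}\,dx \lesssim \bigl\| [b,L^{-\alpha/2}]\bigr\|,
\]
which is equivalent to $b\in BMO_1(\omega)=BMO(\omega)$.

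The main obstacle I expect is the necessity half: the Gaussian upper bound alone gives only an upper bound on $K_\alpha$, so one must extract a \emph{lower} bound of the form $K_\alpha(x,y)\gtrsim r^{\alpha-n}$ on a well-separated pair of cubes in order to run the Janson argument. This is delicate because the abstract hypothesis on $L$ does not supply positivity of $p_t$ directly, and one must argue via an integral lower bound on $\int_0^\infty p_t(x,y)t^{\alpha/2-1}dt$ over a narrow band of $t\approx r^2$ where the short-time asymptotics force $p_t(x,y)$ to be comparable to the Gaussian model. Making this rigorous, together with tracking the weights $\omega$ and $\omega^{1-(1-\alpha/n)q}$ correctly on both sides, is where the real work lies.
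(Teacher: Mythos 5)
Your plan for $(a)\Rightarrow(b)$ diverges from the paper's and, as described, does not close. The paper does not estimate the commutator directly on each cube; it proves a pointwise estimate for the \emph{semigroup-adapted} sharp maximal function $M^{\sharp}_L([b,L^{-\alpha/2}]f)$ (Lemma \ref{8}) in terms of $\omega(x)M_{r,\omega}(L^{-\alpha/2}f)(x)$, $\omega(x)^{1-\alpha/n}M_{\alpha,r,\omega}f(x)$ and $\omega(x)M_{\alpha,1}f(x)$, and then transfers this to weighted Morrey norms via Martell's good-$\lambda$ inequality (Lemma \ref{1}) and Wang's boundedness lemmas for the maximal operators. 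This structure is not optional here. In your direct decomposition the two \emph{local} terms are the problem: to put $L^{-\alpha/2}\bigl((b-b_Q)f\chi_{2Q}\bigr)$ into $L^q(Q)$ you would need an $L^{p_0}\to L^q$ bound with $(b-b_Q)f\chi_{2Q}\in L^{p_0}$, but the exponential integrability of $b-b_Q$ only gives $p_0<p$, hence a target space strictly below $L^q$; similarly $(b-b_Q)L^{-\alpha/2}f$ in $L^q$ would require higher local integrability of $L^{-\alpha/2}f$ than the hypotheses provide. The sharp-function approach avoids both issues because $M^{\sharp}_L$ only involves $L^1$ averages, so Kolmogorov's inequality (weak type $(1,n/(n-\alpha))$ of $L^{-\alpha/2}$) and H\"older at the $L^1$ level suffice. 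Two further points: the far-field term requires the kernel bound for the \emph{difference} operator $L^{-\alpha/2}-e^{-t_QL}L^{-\alpha/2}$ (Lemma \ref{6}), which supplies the extra factor $t_Q/|x-y|^2\sim 2^{-2k}$; and the hypothesis $r_\omega>\frac{1-k}{p/q-k}$ is consumed inside Wang's Morrey-space lemmas for $M_{\alpha,1}$ and $M_{r,\omega}$, not in any annulus summation.

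For $(b)\Rightarrow(a)$ your diagnosis of the obstacle is correct, and you should take it seriously: a Gaussian \emph{upper} bound on $p_t$ cannot yield a lower bound on the kernel of $L^{-\alpha/2}$ (nothing in \eqref{1.1} prevents cancellation, or even $p_t$ from being very small on a separated pair of cubes), so the indicator-test-function version of Janson's argument cannot be completed for general $L$. The paper does not attempt it: its necessity proof simply sets $L=-\Delta$, so that $L^{-\alpha/2}=I_\alpha$ has the exact kernel $c|x-y|^{\alpha-n}$, and runs the Janson--Chanillo Fourier-expansion of $|x|^{n-\alpha}$ on a cube separated from the origin, pairing $[b,I_\alpha](\chi_{Q_{Z_0}}e^{i\langle\nu_m,\cdot/\rho\rangle})$ against the dual weight. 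In other words, what is actually proved is the equivalence $(a')\Leftrightarrow(b')$ for $I_\alpha$; the implication $(b)\Rightarrow(a)$ for an arbitrary $L$ satisfying only \eqref{1.1} is not established by the paper either, and your proposal correctly identifies why no proof along these lines can exist without additional lower-bound hypotheses on $p_t$.
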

Specially, when $k=0$ in Theorem \ref{thm1.1}, we get
\begin{corollary}
Let $0<\alpha<n, 1<p<n/\alpha, 1/q=1/p-\alpha/n,$
 $\omega^{\frac{q}{p}}\in A_1$ and $ r_\omega> \frac{q}{p},$  where $r_\omega$ denotes the critical index of $\omega$ for the reverse H\"{o}lder condition. Then the following conditions are equivalent:
\par $(a)$  $b\in BMO(\omega). $
 \par $(b)$  $[b,L^{-\alpha/2}]$ is bounded from $L^{p}(\omega)$ to $L^{q}(\omega^{1-(1-\alpha/n)q})$.
\par Further more, if $L = -\Delta$ is the Laplacian, Then the following conditions are equivalent:
\par $(a^{'})$  $b\in BMO(\omega). $
 \par $(b^{'})$  $[b,I_\alpha]$ is bounded from $L^{p}(\omega)$ to $L^{q}(\omega^{1-(1-\alpha/n)q})$.
\end{corollary}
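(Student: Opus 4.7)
The plan is to deduce this Corollary directly from Theorem \ref{thm1.1} by specializing to $k=0$, and then to pass from the abstract operator $L^{-\alpha/2}$ to the classical Riesz potential $I_\alpha$ by checking that the Laplacian fits the hypothesis on $L$.

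First, I would verify that every hypothesis of the Corollary matches the $k=0$ case of Theorem \ref{thm1.1}. The ranges $0<\alpha<n$, $1<p<n/\alpha$, $1/q=1/p-\alpha/n$ and the weight assumption $\omega^{q/p}\in A_1$ are identical. The range condition $0\le k<p/q$ is obviously compatible with $k=0$. The critical-index condition in Theorem \ref{thm1.1} reads
\begin{equation*}
r_\omega>\frac{1-k}{p/q-k},
\end{equation*}
and setting $k=0$ gives exactly $r_\omega>q/p$, which is the hypothesis of the Corollary.

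Next, I would identify the function spaces. By the definition of $L^{p,k}(\mu,\nu)$ recalled in the introduction, taking $k=0$ collapses the Morrey norm to the Lebesgue norm, so
\begin{equation*}
L^{p,0}(\omega)=L^p(\omega),\qquad L^{q,0}\bigl(\omega^{1-(1-\alpha/n)q},\omega\bigr)=L^q\bigl(\omega^{1-(1-\alpha/n)q}\bigr).
\end{equation*}
Hence the boundedness statement (b) of the Corollary is literally the $k=0$ instance of statement (b) of Theorem \ref{thm1.1}, and the equivalence (a)$\Leftrightarrow$(b) is immediate from the theorem.

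For the second assertion, I would recall that when $L=-\Delta$ the semigroup $e^{-tL}$ is the classical heat semigroup, whose kernel is the Gauss-Weierstrass kernel and therefore satisfies the Gaussian bound \eqref{1.1}. In this case the abstract definition of $L^{-\alpha/2}$ via the subordination formula reduces (up to the normalizing constant $\Gamma(\alpha/2)^{-1}$) to the classical Riesz potential $I_\alpha$, so $[b,L^{-\alpha/2}]=[b,I_\alpha]$. Applying the first part of the Corollary to $L=-\Delta$ then yields (a$'$)$\Leftrightarrow$(b$'$). Since the Corollary is a pure specialization of Theorem \ref{thm1.1}, there is no genuine obstacle; the only point requiring care is checking, for the Laplacian case, that the heat-kernel Gaussian bound indeed places $-\Delta$ within the class of operators $L$ covered by Theorem \ref{thm1.1}, which is standard.
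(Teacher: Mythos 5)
Your proposal is correct and is exactly the paper's (implicit) argument: the corollary is stated as the $k=0$ specialization of Theorem \ref{thm1.1}, with $r_\omega>\frac{1-k}{p/q-k}$ reducing to $r_\omega>q/p$, the Morrey norms collapsing to weighted Lebesgue norms, and the Laplacian case following since the heat kernel satisfies the Gaussian bound \eqref{1.1} and $L^{-\alpha/2}$ then coincides with $I_\alpha$.
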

\begin{theorem}\label{thm1.2}
Let $0<\alpha<n, 0\leq k<p/q, 1/q=1/p-\alpha/n,$   and $1<r,s<\infty$ such that $1<rs<p<n/\alpha$,  $ \omega^{rs}\in A_{p/rs,q/rs}.$ Then the following conditions are equivalent:
\par $(a)$ $b\in BMO. $
 \par$(b)$ $[b,L^{-\alpha/2}]$ is bounded from $L^{p,k}(\omega^p,\omega^q)$ to $L^{q,kq/p}(\omega^q)$.
\end{theorem}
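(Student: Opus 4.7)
My plan is to prove the two implications $(a)\Rightarrow (b)$ and $(b)\Rightarrow (a)$ separately, following the general template used for Theorem \ref{thm1.1}, but with classical $BMO$ replacing $BMO(\omega)$ and with the weight being distributed in a manner compatible with the hypothesis $\omega^{rs}\in A_{p/rs,q/rs}$. Before starting I would record that this Muckenhoupt-type condition, combined with $rs<p<n/\alpha$, implies the weighted $(L^{p}(\omega^{p}),L^{q}(\omega^{q}))$-boundedness of $L^{-\alpha/2}$ itself; this follows because the Gaussian bound gives the pointwise estimate $|K_{\alpha}(x,y)|\le C|x-y|^{\alpha-n}$ on the kernel of $L^{-\alpha/2}$, after which the classical weighted fractional integral theory applies.

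For sufficiency $(a)\Rightarrow(b)$, fix a cube $Q$ and split $f=f_{1}+f_{2}$ with $f_{1}=f\chi_{2Q}$. Using
\[
[b,L^{-\alpha/2}]f(x)=(b(x)-b_{2Q})L^{-\alpha/2}f(x)-L^{-\alpha/2}((b-b_{2Q})f_{1})(x)-L^{-\alpha/2}((b-b_{2Q})f_{2})(x),
\]
I would estimate the three pieces in $L^{q}(Q,\omega^{q})$. The first piece is controlled by Hölder's inequality with exponents depending on $r,s$, separating the factor $|b-b_{2Q}|$, which is controlled by the John--Nirenberg inequality together with $\|b\|_{BMO}$, from $L^{-\alpha/2}f$, whose weighted $(p,q)$-norm over $Q$ is bounded by the preliminary result. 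The local piece is handled by applying the same weighted boundedness to $(b-b_{2Q})f_{1}$ and pulling $|b-b_{2Q}|$ out by Hölder on $2Q$. The global piece is bounded by writing $(2Q)^{c}=\bigcup_{j\ge 1}(2^{j+1}Q\setminus 2^{j}Q)$, using $|K_{\alpha}(x,y)|\lesssim(2^{j}|Q|^{1/n})^{\alpha-n}$ on the $j$-th annulus and the standard BMO growth $|b_{2^{j+1}Q}-b_{2Q}|\lesssim j\|b\|_{BMO}$, and summing a geometric series which converges precisely because $k<p/q$. After dividing by $\omega^{q}(Q)^{kq/p}$ and taking the supremum over $Q$, the Morrey bound follows.

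For necessity $(b)\Rightarrow(a)$, I would carry out a Janson-type testing argument in the spirit of \cite{ludingyan}. Given a cube $Q=Q(x_{0},r)$, select a cube $\widetilde{Q}$ of comparable size located at distance $\sim r$ from $Q$; on $Q\times\widetilde{Q}$ the kernel $K_{\alpha}(x,y)$ obtained by integrating the Gaussian semigroup kernel against $t^{\alpha/2-1}$ can be analysed and shown to behave essentially like $r^{\alpha-n}$, with an argument function slowly varying on this set. Testing the commutator against $f=e^{-i\arg K_{\alpha}(z_{0},\cdot)}\chi_{\widetilde{Q}}$ for a reference point $z_{0}\in Q$, one obtains
\[
\frac{1}{|Q|}\int_{Q}|b(x)-c_{Q}|\,dx\le C\,\bigl\|[b,L^{-\alpha/2}]\bigr\|_{L^{p,k}(\omega^{p},\omega^{q})\to L^{q,kq/p}(\omega^{q})}
\]
for a suitable constant $c_{Q}$, after applying Hölder on $Q$ to convert the weighted $L^{q}$ bound into an unweighted $L^{1}$ average (which is possible because the weight $\omega^{q}$ is doubling, hence $\omega^{q}(Q)^{kq/p}/|Q|^{?}$ gives matching factors on both sides). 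Taking the supremum over $Q$ yields $b\in BMO$.

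The main obstacle will be the necessity direction: one must extract from the semigroup the quasi-constant behaviour of $K_{\alpha}(x,y)$ on $Q\times\widetilde{Q}$. Since we only assume the Gaussian upper bound \eqref{1.1}, the required quantitative control of $\arg K_{\alpha}(x,y)$ and of $|K_{\alpha}(x,y)|$ from below must be derived either from an implicit positivity of $p_{t}$ (valid in the standard examples covered by the theorem) or from a direct contour argument on the defining integral for $L^{-\alpha/2}$. The second subtlety is bookkeeping for the weight: verifying that $\omega^{rs}\in A_{p/rs,q/rs}$ is strong enough to provide all the reverse-Hölder and $A_{\infty}$ type facts used in the summation step of sufficiency and in the duality step of necessity.
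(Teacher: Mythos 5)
Your sufficiency argument takes a genuinely different route from the paper's: you propose a direct local/global decomposition $f=f_{1}+f_{2}$ with the three-term splitting of the commutator estimated cube by cube, whereas the paper runs everything through the sharp maximal function $M^{\sharp}_{L}$ adapted to the semigroup (Lemma \ref{1} gives the Fefferman--Stein inequality on weighted Morrey spaces, Lemma \ref{10} gives the pointwise bound $M^{\sharp}_{L}([b,L^{-\alpha/2}]f)\lesssim \|b\|_{BMO}(M_{r}(L^{-\alpha/2}f)+M_{\alpha,rs}f)$, and Lemmas \ref{11}--\ref{13} of Komori--Shirai finish the job). The paper's route offloads all of the weight bookkeeping onto known maximal-function lemmas; your route must reproduce that bookkeeping by hand, and in particular you must make explicit where the room $rs<p$ is spent: in the term $(b-b_{2Q})L^{-\alpha/2}f$, H\"older against $|b-b_{2Q}|$ forces you to control $L^{-\alpha/2}f$ in an $L^{qs'}$-type average over $Q$, i.e.\ you need the weighted $(p/s', q/s')$-type boundedness at a strictly higher exponent, which is exactly what $\omega^{rs}\in A_{p/rs,q/rs}$ is there to supply. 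This is doable but is currently only asserted, not verified.

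The genuine gap is in the necessity direction, and you have in effect named it yourself without closing it. A Janson-type testing argument needs two-sided quantitative control of the kernel $K_{\alpha}(x,y)$ of $L^{-\alpha/2}$ on $Q\times\widetilde{Q}$ --- a lower bound on $|K_{\alpha}|$ of order $r^{\alpha-n}$ and slow variation of its argument --- but the hypotheses of the theorem give only the Gaussian \emph{upper} bound \eqref{1.1} on $|p_{t}(x,y)|$. Under that assumption alone $p_{t}$ need not be positive, $K_{\alpha}$ may oscillate or vanish on $Q\times\widetilde{Q}$, and no choice of testing function recovers $\frac{1}{|Q|}\int_{Q}|b-c_{Q}|$; neither of your two proposed escapes (implicit positivity, a contour argument) is available in this generality, so the step fails as stated. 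The paper avoids the issue entirely by specializing: for Theorem \ref{thm1.1} it sets $L=-\Delta$, where $K_{\alpha}(x,y)=c|x-y|^{\alpha-n}$ is explicit and positive and the Fourier-series expansion of $|x|^{n-\alpha}$ on a cube away from the origin drives the Janson argument; for Theorem \ref{thm1.2} it reduces further to $k=0$, $\omega\equiv 1$ and simply quotes the unweighted $(L^{p},L^{q})$ result of Lu--Ding--Yan. If you want a necessity proof that keeps the general weight and $k$, you should run the Fourier-series version of the argument for $I_{\alpha}$ as in the paper's proof of Theorem \ref{thm1.1}, and you must also pin down the exponent you left as ``$?$'': the matching of $\omega^{q}(Q)^{k/p}$-factors against $\|\chi_{\widetilde{Q}}\|_{L^{p,k}(\omega^{p},\omega^{q})}$ and the dual H\"older factor on $Q$ is precisely where the hypothesis on $\omega$ is consumed, and it cannot be waved through by doubling alone.
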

Specially, when $k=0$ in Theorem \ref{thm1.2}, we obtain
%
\begin{corollary}
Let $0<\alpha<n, 1/q=1/p-\alpha/n,$   and $1<r,s<\infty$ such that $1<rs<p<n/\alpha$,  $ \omega^{rs}\in A_{p/rs,q/rs}.$ Then the following conditions are equivalent:
\par $(a)$ $b\in BMO. $
 \par$(b)$ $[b,L^{-\alpha/2}]$ is bounded from $L^{p}(\omega^p)$ to $L^{q}(\omega^q)$.
\par Further more, if $L = -\Delta$ is the Laplacian, Then the following conditions are equivalent:
\par $(a^{'})$   $b\in BMO. $
 \par $(b^{'})$  $[b,I_\alpha]$ is bounded from $L^{p}(\omega^p)$ to $L^{q}(\omega^q)$.
\end{corollary}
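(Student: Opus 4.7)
The plan is essentially to read off this corollary as the $k=0$ specialization of Theorem \ref{thm1.2}, with an additional identification when $L=-\Delta$. No fresh argument is needed; the work has already been done in the more general statement.

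First I would recall from the definition of the weighted Morrey space that when $k=0$ one has $L^{p,0}(\mu,\nu)=L^{p}(\mu)$ (the supremum over cubes of the unweighted $L^p(\mu)$-norm on $Q$ equals the global $L^p(\mu)$-norm). Applied to the two weights appearing in Theorem \ref{thm1.2}, this gives $L^{p,0}(\omega^{p},\omega^{q})=L^{p}(\omega^{p})$ and $L^{q,0\cdot q/p}(\omega^{q})=L^{q}(\omega^{q})$. Substituting $k=0$ into Theorem \ref{thm1.2}, the hypothesis $0\le k<p/q$ is automatically satisfied (since $p/q=1-\alpha p/n<1$ is not needed, we only need $0<p/q$, which is immediate), and the equivalence $(a)\Longleftrightarrow (b)$ for the commutator on weighted Morrey spaces translates verbatim into the equivalence claimed in the corollary on weighted $L^p$ spaces.

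For the statement with the Laplacian, I would observe that when $L=-\Delta$ the subordination formula
\[
L^{-\alpha/2}f(x)=\frac{1}{\Gamma(\alpha/2)}\int_0^\infty e^{-tL}(f)(x)\,\frac{dt}{t^{-\alpha/2+1}}
\]
reduces (via the heat semigroup and the standard computation $\int_0^\infty t^{\alpha/2-1}e^{-t|\xi|^2}\,dt=\Gamma(\alpha/2)|\xi|^{-\alpha}$ on the Fourier side) to $L^{-\alpha/2}f=I_\alpha f$, the classical Riesz potential. Since the heat kernel trivially satisfies the Gaussian bound \eqref{1.1}, Theorem \ref{thm1.2} applies to $L=-\Delta$, and thus $[b,L^{-\alpha/2}]=[b,I_\alpha]$. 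Hence $(a')\Longleftrightarrow(b')$ is just $(a)\Longleftrightarrow(b)$ for this particular $L$.

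There is essentially no obstacle: the entire content is already packaged in Theorem \ref{thm1.2}, and the corollary only records the two convenient specializations $k=0$ and $L=-\Delta$. The only thing to check carefully is that the hypothesis constraints ($0<\alpha<n$, $1<rs<p<n/\alpha$, $\omega^{rs}\in A_{p/rs,q/rs}$, and $1/q=1/p-\alpha/n$) are preserved unchanged when one sets $k=0$, which they manifestly are.
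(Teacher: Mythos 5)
Your proposal is correct and matches the paper's own treatment: the corollary is obtained simply by setting $k=0$ in Theorem \ref{thm1.2} (using $L^{p,0}(\mu,\nu)=L^{p}(\mu)$ and the remark that the supporting lemmas persist at $k=0$), and the Laplacian case follows from the identification $L^{-\alpha/2}=I_\alpha$ already noted in the introduction. No further comment is needed.
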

\remark{It is easy to see that our results extend the results in \cite{chanillo},\cite{duongyan},\cite{ludingyan},\cite{wang} significantly.}

%
\section{PREREQUISITE MATERIAL}
  Let us first recall some definitions.
\begin{definition}
The Hardy-Littlewood maximal operator $M$ is defined by
 $$M(f)(x)=\sup_{x\in Q}\frac{1}{|Q|}\int_Q |f(y)|dy.$$
   Let $\omega$ be a weight. The weighted maximal operator $M_\omega$ is defined by
  $$M_\omega(f)(x)=\sup_{x\in Q}\frac{1}{\omega(Q)}\int_Q |f(y)|\omega(y)dy.$$
 For $0<\alpha<n, r\geq 1,$ the fractional maximal operator $M_{\alpha,r}$ is defined by
 $$M_{\alpha,r}(f)(x)=\sup_{x\in Q}\left( \frac{1}{|Q|^{1-\frac{\alpha r}{n}}}\int_Q |f(y)|^rdy\right)^{\frac{1}{r}};$$
and the fractional weighted maximal operator $M_{\alpha,r,\omega}$ is defined by
  $$M_{\alpha,r,\omega}(f)(x)=\sup_{x\in Q}\left( \frac{1}{\omega(Q)^{1-\frac{\alpha r}{n}}}\int_Q |f(y)|^r\omega(y)dy\right)^{\frac{1}{r}}.$$
   \\
  \quad For any $f\in L^p(R^n), p\geq 1,$ the sharp maximal function $M^{\sharp}_Lf$ associated the generalized approximations to the identity $\{e^{-tL},t>0\}$ is given by
 \begin{equation*}
   M^{\sharp}_Lf(x)=\sup_{x\in Q} \frac{1}{|Q|}\int_Q |f(y)-e^{-t_QL}f(y)|dy
\end{equation*}
 where $t_Q=r^2_Q$ and $r_Q$ is the radius of the ball $Q$.
\par In the above definitions, the supremum is taken over all cubes $Q$ containing $x.$

\end{definition}

\begin{definition}
A weight function $\omega$ is in the Muckenhoupt class $A_p$
with $1 < p < \infty$ if for every cube $Q$ in $R^n$, there exists a positive constant
$C$ which is independent of $Q$ such that
\begin{equation*}
   \left(\frac{1}{|Q|}\int_Q\omega(x)dx\right)\left(\frac{1}{|Q|}\int_Q\omega(x)^{-\frac{1}{p-1}}dx\right)^{p-1}\leq C.
\end{equation*}
When $p=1, \omega\in A_1,$ if
\begin{equation*}
   \left(\frac{1}{|Q|}\int_Q\omega(x)dx\right)\leq C {\texttt{ess}\,\inf}_{x\in Q}\, \omega(x).
\end{equation*}

When $p=\infty, \omega\in A_\infty,$ if there exist positive constants $\delta$ and $C$ such that given a cube $Q$ and $E$ is a measurable subset of $Q$, then
\begin{equation*}
   \frac{\omega(E)}{\omega(Q)}\leq C\left(\frac{|E|}{|Q|}\right)^\delta .
\end{equation*}
\end{definition}

\begin{definition}
A weight function $\omega$ belongs to $A_{p,q}$ for $1 < p < q <\infty$ if for every cube $Q$ in $R^n$, there exists a positive constant $C$ which is
independent of $Q$ such that
\begin{equation*}
   \left(\frac{1}{|Q|}\int_Q\omega(x)^qdx\right)^{\frac{1}{q}}\left(\frac{1}{|Q|}\int_Q\omega(x)^{-p^{'}}dx\right)^{\frac{1}{p^{'}}}\leq C.
\end{equation*}
where $p^{'}$ denotes the conjugate exponent of $p>1,$ that is $1/p+1/p^{'}=1.$
\end{definition}

\begin{definition}
A weight function $\omega$ belongs to the reverse H\"{o}lder class
$RH_r$ if there exist two constants $r > 1$ and $C > 0$ such that the following
reverse H\"{o}lder inequality
\begin{equation*}
   \left(\frac{1}{|Q|}\int_Q\omega(x)^rdx\right)^{\frac{1}{r}}\leq C   \left(\frac{1}{|Q|}\int_Q\omega(x)dx\right)
\end{equation*}
holds for every cube $Q$ in $R^n$.
\end{definition}
It is well known that if $\omega\in  A_p$ with $1 \leq p < \infty$, then there exists
$r > 1$ such that $\omega\in RH_r.$ It follows from H\"{o}lder¡¯s inequality that $\omega \in  RH_r$
implies $\omega \in RH_s$ for all $1 < s < r.$ Moreover, if $\omega\in  RH_r, r > 1,$ then we
have $\omega\in RH_{r+\epsilon}$ for some $ \epsilon> 0.$ We thus write $r_w= \sup\{r > 1 : \omega \in RH_r\}$
to denote the critical index of $\omega$ for the reverse H\"{o}lder condition.

We will make use of the following lemmas. We first provide a weighted version of the local good $\lambda$ inequality for $ M^{\sharp}_L$ which allow us to obtain an analog of the classical Fefferman-Stein(see \cite{fefferman,coifman}) estimate on weighted Morrey spaces.
\begin{lemma}(\cite{martell})\label{1}
Assume that the semigroup $e^{-tL}$ has  a kernel $p_t(x,y)$ which satisfies the upper bound  \eqref{1.1}. Take $\lambda>0, f\in L^1_0(R^n)$ and a ball $Q_0$ such that there exists $x_0\in Q_0$ with $M f(x_0)\leq \lambda.$ Then, for every $\omega \in A_\infty,0<\eta<1,$ we can find $\gamma>0$(independent of $\lambda,Q_0, f,x_0$) and constant $C_{\omega,}r>0$(which only depend on $\omega$) .
 $$\omega\{x\in Q_0: M f>A \lambda, M^{\sharp}_Lf(x)\leq \gamma\lambda\}\leq C_{\omega}\eta^r \omega(Q_0).$$
 where $A>1$ is a fixed constant which depends only on $n.$
\end{lemma}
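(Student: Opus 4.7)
The plan is to run the classical Calder\'on--Fefferman--Stein good-$\lambda$ scheme, but with the ordinary sharp maximal function replaced by $M^{\sharp}_L$, so that the Gaussian kernel bound \eqref{1.1} must intervene to close the argument. Since $\omega\in A_\infty$, there exist $C_\omega, r>0$ such that $\omega(E)/\omega(Q_0)\le C_\omega(|E|/|Q_0|)^r$ for every measurable $E\subset Q_0$. Hence it suffices to prove an unweighted estimate
$$
|\{x\in Q_0:\, Mf(x)>A\lambda,\,M^{\sharp}_L f(x)\le \gamma\lambda\}|\le \eta\,|Q_0|
$$
for $\gamma$ chosen sufficiently small, after which the $A_\infty$ reverse-H\"older bound upgrades $\eta$ to $C_\omega\eta^r$ in the weighted statement.

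To produce the unweighted bound, split $f=f_1+f_2$ with $f_1:=f\chi_{4Q_0}$. For $y\in Q_0$, decomposing $(4Q_0)^c$ into dyadic annuli $2^{k+1}Q_0\setminus 2^k Q_0$ ($k\ge 2$) and applying \eqref{1.1} with $t_{Q_0}=r_{Q_0}^2$, one obtains the pointwise bound $|e^{-t_{Q_0}L}f_2(y)|\le C\,Mf(x_0)\le C\lambda$: the Gaussian factor $e^{-c|y-z|^2/t_{Q_0}}$ kills the polynomial volume growth $|2^k Q_0|\approx 2^{kn}|Q_0|$, and the resulting series in $k$ converges uniformly. The same annular argument gives $Mf_2(x)\le C\lambda$ on $Q_0$. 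Fixing $A$ large enough depending only on $n$, the condition $Mf(x)>A\lambda$ on $Q_0$ thus forces $Mf_1(x)>A\lambda/2$.

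Next I exploit $M^{\sharp}_L$: if $M^{\sharp}_L f(x)\le\gamma\lambda$, then every cube $Q\ni x$ contained in $4Q_0$ satisfies $\tfrac{1}{|Q|}\int_Q |f-e^{-t_Q L}f|\le \gamma\lambda$, and taking the supremum over such $Q$ yields $M\big((f-e^{-t_{Q_0}L}f)\chi_{4Q_0}\big)(x)\le C\gamma\lambda$. Combining this with the pointwise bound from the previous paragraph gives $Mf_1(x)\le C\gamma\lambda+C\lambda$ on the level set under consideration. Choosing $A$ so that $A/2>2C$ traps the level set inside $\{M((f-e^{-t_{Q_0}L}f)\chi_{4Q_0})>cA\lambda\}$; the weak-type $(1,1)$ bound for $M$ then produces
$$
|\{x\in Q_0:\,Mf(x)>A\lambda,\,M^{\sharp}_L f(x)\le\gamma\lambda\}|\le \frac{C\gamma}{A}\,|4Q_0|\le \eta\,|Q_0|
$$
by choosing $\gamma$ small relative to $\eta$.

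The main obstacle is the pointwise control $|e^{-t_{Q_0}L}f_2(y)|\le C\lambda$ on $Q_0$: one must balance the exponential decay $e^{-c|y-z|^2/t_{Q_0}}$ against the polynomial annular volumes, and check that the sum in $k$ is bounded by an absolute constant times $Mf(x_0)$, uniformly in $Q_0$, $\lambda$, and $f$. This is the only place where the specific structure of the semigroup (through \eqref{1.1}) enters in an essential way; once it is established, the remainder is a weighted adaptation of the Fefferman--Stein good-$\lambda$ argument, with the $A_\infty$ property of $\omega$ providing the last step.
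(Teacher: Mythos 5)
The paper does not actually prove this lemma --- it is quoted verbatim from \cite{martell} --- so your argument has to be measured against the standard proof in that reference. Your overall architecture is the right one and agrees with it: reduce to an unweighted estimate $|E|\le\eta|Q_0|$ for the level set $E$ via the $A_\infty$ comparison property, split $f=f_1+f_2$ with $f_1=f\chi_{4Q_0}$, use the Gaussian bound \eqref{1.1} over dyadic annuli to get $|e^{-t_{Q_0}L}f_2(y)|\le C\,Mf(x_0)\le C\lambda$ and $Mf_2\le C\lambda$ on $Q_0$, and finish with the weak $(1,1)$ inequality for $M$.

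The gap is in the one step where $M^{\sharp}_L$ actually enters. You claim that $M^{\sharp}_Lf(x)\le\gamma\lambda$ yields $M\bigl((f-e^{-t_{Q_0}L}f)\chi_{4Q_0}\bigr)(x)\le C\gamma\lambda$ ``by taking the supremum'' over cubes $Q\ni x$. This does not follow: the sharp function controls $\frac{1}{|Q|}\int_Q|f-e^{-t_QL}f|$ with the time parameter $t_Q=r_Q^2$ varying with $Q$, and for small $Q$ the discrepancy $e^{-t_QL}f-e^{-t_{Q_0}L}f$ is not controlled by $M^{\sharp}_Lf$; taming this cube-dependence is precisely the non-classical difficulty of the lemma. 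Moreover the ensuing logic is internally inconsistent: if you really had $Mf_1(x)\le C\gamma\lambda+C\lambda$ pointwise on $E$ while also $Mf_1(x)>A\lambda/2$ there, then $E$ would be empty for small $\gamma$, and the weak-type inequality would have nothing to act on --- indeed you never estimate the $L^1$ norm of the function to which you apply the weak $(1,1)$ bound. The repair is standard and simpler than what you attempted: assume $E\ne\emptyset$ and pick a single $\bar x\in E$; testing $M^{\sharp}_Lf(\bar x)\le\gamma\lambda$ against one fixed cube $\tilde Q\supset 4Q_0$ with $\bar x\in\tilde Q$ and $r_{\tilde Q}\approx r_{Q_0}$ gives the $L^1$ bound $\int_{4Q_0}|f-e^{-t_{\tilde Q}L}f|\le C\gamma\lambda|Q_0|$. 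Your Gaussian estimates (applied with $t_{\tilde Q}$ in place of $t_{Q_0}$) give $|e^{-t_{\tilde Q}L}f|\le C\lambda$ on $4Q_0$, hence $Mf_1\le M\bigl((f-e^{-t_{\tilde Q}L}f)\chi_{4Q_0}\bigr)+C\lambda$ pointwise, so $E\subset\{M\bigl((f-e^{-t_{\tilde Q}L}f)\chi_{4Q_0}\bigr)>A\lambda/4\}$ for $A$ large depending on $n$, and the weak $(1,1)$ inequality applied to that single fixed function yields $|E|\le C\gamma A^{-1}|Q_0|\le\eta|Q_0|$ upon choosing $\gamma\sim\eta$. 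With that replacement your proof closes.
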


 As a consequence, by using the standard arguments, we have the following estimates:
 \par For every $f\in L^{p,k}(\mu,\nu)$, with $1<p<\infty.$
 If $\mu,\nu\in A_\infty, 1<p<\infty, 0\leq k<1.$
 $$||f||_{L^{p,k}(\mu,\nu)}\leq ||Mf||_{L^{p,k}(\mu,\nu)}\leq C||M^{\sharp}_Lf||_{L^{p,k}(\mu,\nu)}$$
 In particular, when $\mu=\nu=\omega$ and $\omega \in A_\infty,$ we have
  $$||f||_{L^{p,k}(\omega)}\leq ||Mf||_{L^{p,k}(\omega)}\leq C||M^{\sharp}_Lf||_{L^{p,k}(\omega)}$$

\begin{lemma}(\cite{wang})\label{2}
Let $0<\alpha <n, 1<p<\frac{n}{\alpha},\frac{1}{q}=\frac{1}{p}-\frac{\alpha}{n}$ and $\omega^{q/p}\in A_1.$
 Then if $0<k<p/q$ and $r_\omega> \frac{1-k}{p/q-k}$, we have
$$||M_{\alpha, 1}f||_{L^{q,kq/p}(\omega^{q/p},\omega)}\leq C ||f||_{L^{p,k}(\omega)}.$$
The same conclusion still hold for $I_\alpha$.
\end{lemma}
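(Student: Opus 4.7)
The plan is to fix an arbitrary cube $Q_0$ and estimate $\omega(Q_0)^{-kq/p}\int_{Q_0}|M_{\alpha,1}f|^q\omega^{q/p}$ via the customary Morrey decomposition $f=f_1+f_2$ with $f_1=f\chi_{2Q_0}$; the local piece is controlled by the weighted strong-type bound for $M_{\alpha,1}$, and the global piece by a pointwise dyadic-annuli estimate whose summability is where the reverse H\"older hypothesis enters.

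For the \emph{local piece}, my first step is the equivalence $\omega^{q/p}\in A_1\Longleftrightarrow\omega^{1/p}\in A_{p,q}$, which follows from $v\in A_{p,q}\Leftrightarrow v^q\in A_{1+q/p'}$ together with $A_1\subset A_{1+q/p'}$. The Muckenhoupt--Wheeden theorem then gives
\begin{equation*}
\|M_{\alpha,1}g\|_{L^q(\omega^{q/p})}\leq C\|g\|_{L^p(\omega)}.
\end{equation*}
Applied to $g=f_1$, combined with $\|f_1\|_{L^p(\omega)}\leq\omega(2Q_0)^{k/p}\|f\|_{L^{p,k}(\omega)}$ and the doubling of $\omega$ (coming from $\omega^{q/p}\in A_\infty$), this closes the local estimate.

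For the \emph{global piece}, any cube $Q\ni x\in Q_0$ meeting $(2Q_0)^c$ satisfies $Q\subset 2^{j+1}Q_0$ with $|Q|\sim|2^jQ_0|$ for some $j\geq 1$, hence
\begin{equation*}
M_{\alpha,1}f_2(x)\leq C\sum_{j=1}^{\infty}\frac{1}{|2^jQ_0|^{1-\alpha/n}}\int_{2^jQ_0}|f|.
\end{equation*}
H\"older with exponents $(p,p')$, factoring $|f|=|f|\omega^{1/p}\cdot\omega^{-1/p}$, together with the $A_{p,q}$ consequence
\begin{equation*}
\Bigl(\int_{2^jQ_0}\omega^{-p'/p}\Bigr)^{1/p'}\leq C|2^jQ_0|^{1-\alpha/n}\omega^{q/p}(2^jQ_0)^{-1/q},
\end{equation*}
produces the uniform bound
\begin{equation*}
M_{\alpha,1}f_2(x)\leq C\|f\|_{L^{p,k}(\omega)}\sum_{j=1}^{\infty}\omega(2^jQ_0)^{k/p}\omega^{q/p}(2^jQ_0)^{-1/q}\qquad(x\in Q_0).
\end{equation*}
Integrating the $q$-th power against $\omega^{q/p}$ over $Q_0$ and dividing by $\omega(Q_0)^{kq/p}$ reduces matters to bounding
\begin{equation*}
\Sigma:=\sum_{j=1}^{\infty}\Bigl(\tfrac{\omega(2^jQ_0)}{\omega(Q_0)}\Bigr)^{k/p}\Bigl(\tfrac{\omega^{q/p}(2^jQ_0)}{\omega^{q/p}(Q_0)}\Bigr)^{-1/q}.
\end{equation*}

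The hard part will be the summability of $\Sigma$; this is precisely where the hypothesis $r_\omega>(1-k)/(p/q-k)$ is used. Choose $r$ with $(1-k)/(p/q-k)<r<r_\omega$, so $\omega\in RH_r$; the implication $\omega\in RH_r\Rightarrow\omega^{q/p}\in RH_{rp/q}$ (verified via Jensen's inequality with the concave map $x\mapsto x^{p/q}$) together with the $A_1$ identity $\omega^{q/p}(Q)\sim|Q|\,\mathrm{ess\,inf}_Q\omega^{q/p}$ yields the two-sided comparison $c\cdot 2^{jn(1-q/(rp))}\omega^{q/p}(Q_0)\leq\omega^{q/p}(2^jQ_0)\leq C\cdot 2^{jn}\omega^{q/p}(Q_0)$ and the analogous $\omega(2^jQ_0)\leq C\cdot 2^{jn}\omega(Q_0)$. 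Substituting these makes each summand at most $C\cdot 2^{-jn\varepsilon}$ with $\varepsilon = 1/q-k/(pr)\cdot(q/p)$; an elementary rearrangement shows $\varepsilon>0$ is equivalent to the assumed inequality on $r$, so $\Sigma\leq C$ and combining with the local estimate yields the lemma. The argument for $I_\alpha$ is identical once one notes the pointwise bound $I_\alpha f_2(x)\leq C\sum_j|2^jQ_0|^{\alpha/n-1}\int_{2^jQ_0}|f|$ for $x\in Q_0$.
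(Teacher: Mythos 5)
The paper itself offers no proof of this lemma --- it is quoted from Wang's preprint --- so your argument has to stand on its own. Its architecture (split $f=f_1+f_2$ on $2Q_0$, Muckenhoupt--Wheeden for the local part via $\omega^{1/p}\in A_{p,q}$, dyadic annuli plus the $A_{p,q}$ duality bound for the tail) is the right one, and everything up to the series $\Sigma$ checks out. The gap is in the summability of $\Sigma$. With the two bounds you invoke --- $\omega(2^jQ_0)\le C2^{jn}\omega(Q_0)$ and $\omega^{q/p}(2^jQ_0)\ge c\,2^{jn(1-q/(rp))}\omega^{q/p}(Q_0)$ --- the $j$-th summand is at most $C2^{jn(k/p-1/q+1/(rp))}$, so what your argument actually requires is $1/q-k/p-1/(rp)>0$, i.e.\ $r>\frac{1}{p/q-k}$. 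That is strictly stronger than the hypothesis $r_\omega>\frac{1-k}{p/q-k}$ (for $k=1/2$, $p/q=3/4$ the two thresholds are $4$ and $2$), so for a weight whose critical index lies between them you cannot choose an admissible $r$ and the proof does not close. Your displayed $\varepsilon=1/q-k/(pr)\cdot(q/p)$ is moreover not the exponent your own bounds produce, and the claimed equivalence of $\varepsilon>0$ with the assumed inequality on $r$ is false.

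The loss comes from estimating the two ratios independently: when $\omega(2^jQ_0)$ is as large as $2^{jn}\omega(Q_0)$, the quantity $\omega^{q/p}(2^jQ_0)$ is correspondingly large, and decoupling the factors discards that correlation. The repair is to couple them first: since $\omega^{q/p}\in A_1$ and hence also $\omega\in A_1$, the ess-inf characterization of $A_1$ gives $\omega^{q/p}(Q)\approx|Q|^{1-q/p}\omega(Q)^{q/p}$ for every cube $Q$, and substituting this collapses the $j$-th summand exactly to $2^{j\alpha}\bigl(\omega(2^jQ_0)/\omega(Q_0)\bigr)^{-(1-k)/p}$. Now apply $RH_r$ to $\omega$ itself, which yields $\omega(Q_0)/\omega(2^jQ_0)\le C2^{-jn(1-1/r)}$; the summand is then at most $C2^{j(\alpha-n(1-1/r)(1-k)/p)}$, and positivity of the decay exponent is precisely the condition $r>\frac{1-k}{p/q-k}$, which the hypothesis on $r_\omega$ allows. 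With that substitution the rest of your write-up goes through.
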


\begin{lemma}(\cite{wang})\label{21}
Let $0<\alpha <n, 1<p<\frac{n}{\alpha},\frac{1}{q}=\frac{1}{p}-\frac{\alpha}{n}$ and $\omega^{q/p}\in A_1.$
 Then if $0<k<p/q, 1<r<p$ and $r_\omega> \frac{1-k}{p/q-k}$, we have
$$||M_{r,\omega}f||_{L^{q,kq/p}(\omega^{q/p},\omega)}\leq C ||f||_{L^{q,kq/p}(\omega^{q/p},\omega)}.$$
\end{lemma}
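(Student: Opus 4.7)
The plan is to adapt the standard local--global decomposition used for maximal inequalities on Morrey spaces. Fix a cube $Q_0$ and write $f=f_1+f_2$ with $f_1=f\chi_{2Q_0}$ and $f_2=f\chi_{(2Q_0)^c}$. Note that $\omega^{q/p}\in A_1$ together with $p/q<1$ implies, via Jensen applied to the concave map $t\mapsto t^{p/q}$, that $\omega\in A_1$; in particular $\omega$ is doubling and $\omega^{q/p}(Q)\approx\omega(Q)^{q/p}|Q|^{1-q/p}$ for every cube $Q$. The task reduces to showing $\int_{Q_0}(M_{r,\omega}f_i)^q\omega^{q/p}\le C\omega(Q_0)^{kq/p}\|f\|_{L^{q,kq/p}(\omega^{q/p},\omega)}^q$ for $i=1,2$, after which one takes the supremum over $Q_0$.

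For $f_1$, I would first establish the underlying strong-type inequality that $M_{r,\omega}$ is bounded on $L^q(\omega^{q/p})$. Since $M_{r,\omega}g=(M_\omega(|g|^r))^{1/r}$, this amounts to bounding the doubling-measure maximal operator $M_\omega$ on $L^{q/r}(\omega^{q/p})$; on the space of homogeneous type $(\rn,\omega\,dx)$ this follows from verifying that $\omega^{q/p-1}$ satisfies the $A_{q/r}$ condition with respect to $\omega\,dx$, which is checked directly using the identity $\omega^{q/p}(Q)\approx\omega(Q)^{q/p}|Q|^{1-q/p}$ and Jensen applied to $\omega^\beta$ with $\beta=q(p-r)/[p(q-r)]<1$. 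Combined with the doubling of $\omega$, this gives $\int_{Q_0}(M_{r,\omega}f_1)^q\omega^{q/p}\le C\int_{2Q_0}|f|^q\omega^{q/p}\le C\omega(Q_0)^{kq/p}\|f\|^q$.

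For $f_2$, given $x\in Q_0$ any cube $Q'\ni x$ meeting $(2Q_0)^c$ must satisfy $|Q'|\gtrsim|Q_0|$, so after comparing $Q'$ with a dyadic dilate and using doubling of $\omega$, one has $M_{r,\omega}f_2(x)\le C\sup_{j\ge1}(\omega(2^jQ_0)^{-1}\int_{2^jQ_0}|f|^r\omega)^{1/r}$. Applying H\"older with exponents $q/r$ and $q/(q-r)$ to the factorization $|f|^r\omega=(|f|^r\omega^{r/p})\cdot\omega^{1-r/p}$, bounding the first factor by the Morrey norm of $f$ and the second by Jensen (since $\beta<1$), a short computation yields
\[
\Big(\frac{1}{\omega(2^jQ_0)}\int_{2^jQ_0}|f|^r\omega\Big)^{1/r}\le C\omega(2^jQ_0)^{-(1-k)/p}|2^jQ_0|^{\alpha/n}\|f\|.
\]
Raising to the $q$th power, integrating against $\omega^{q/p}$ over $Q_0$, and using $\omega^{q/p}(Q_0)\approx\omega(Q_0)^{q/p}|Q_0|^{1-q/p}$ together with $q\alpha/n=q/p-1$, the estimate collapses to the uniform bound $\bigl(\omega(Q_0)/\omega(2^jQ_0)\bigr)^{(1-k)q/p}\cdot 2^{jq\alpha}\le C$ for every $j\ge1$. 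This is the main obstacle and is precisely the point at which the reverse H\"older hypothesis intervenes: for $\omega\in RH_\sigma$ the relative inequality $\omega(Q_0)/\omega(2^jQ_0)\le C\cdot 2^{-jn(\sigma-1)/\sigma}$ makes the bound hold provided $\sigma\ge(1-k)/(p/q-k)$, and the strict assumption $r_\omega>(1-k)/(p/q-k)$ supplies such $\sigma$ with room to spare, giving geometric decay in $j$ and closing the argument.
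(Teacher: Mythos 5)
The paper does not actually prove this lemma --- it is imported verbatim from \cite{wang} with no argument given --- so there is nothing internal to compare your proof against; I can only judge it on its own. Your argument is correct and is the standard local/global route one would expect. The key computations all check: (i) $\omega^{q/p}\in A_1$ with $p/q<1$ does give $\omega\in A_1$ and the comparison $\omega^{q/p}(Q)\approx \omega(Q)^{q/p}|Q|^{1-q/p}$; (ii) for the local part, the exponent $\beta=\frac{q(p-r)}{p(q-r)}$ lies in $(0,1)$ precisely because $r<p<q$, and your bookkeeping shows the $A_{q/r}(\omega\,dx)$ product for $u=\omega^{q/p-1}$ collapses to a constant (both the $\omega(Q)$-exponent $q/p-1+(\beta-1)(q/r-1)$ and the $|Q|$-exponent vanish), so the maximal theorem on the space of homogeneous type $(\rn,\omega\,dx)$ applies; (iii) for the global part, H\"older plus Jensen gives exactly the stated bound with $|2^jQ_0|^{\alpha/n}$, and the final requirement reduces, via $q\alpha/n=q/p-1$ and $\omega\in RH_\sigma\Rightarrow \omega(Q_0)/\omega(2^jQ_0)\le C2^{-jn(\sigma-1)/\sigma}$, to $\sigma\ge\frac{1-k}{p/q-k}$, which the hypothesis on $r_\omega$ supplies. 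One cosmetic remark: since you control $M_{r,\omega}f_2$ on $Q_0$ by a \emph{supremum} over $j$ rather than a sum, you only need the displayed quantity to be uniformly bounded in $j$; the geometric decay furnished by the strict inequality $r_\omega>\frac{1-k}{p/q-k}$ is more than is required at that step (though the strictness is still what lets you pick an admissible $\sigma$ in the first place). The proof is complete as written.
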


\begin{lemma}(\cite{wang})\label{3}
$0<\alpha <n, 1<p<\frac{n}{\alpha},\frac{1}{q}=\frac{1}{p}-\frac{\alpha}{n}$, $0<k<p/q$, $\omega\in A_\infty.$
For any $1<r<p,$ we have
$$||M_{\alpha, r,\omega}f||_{L^{q,kq/p}(\omega)}\leq C ||f||_{L^{p,k}(\omega)}.$$
\end{lemma}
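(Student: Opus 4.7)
The strategy is a standard two-piece decomposition: fix an arbitrary cube $Q_0$, write $f=f_1+f_2$ with $f_1=f\chi_{2Q_0}$, and estimate $M_{\alpha,r,\omega}f_1$ and $M_{\alpha,r,\omega}f_2$ on $Q_0$ separately. The argument leans on two ingredients: an $L^p(\omega)\to L^q(\omega)$ bound for $M_{\alpha,r,\omega}$, and the doubling property of $A_\infty$ weights.

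\textbf{Local part.} Observe that $(M_{\alpha,r,\omega}f)^r$ is the $(\alpha r/n)$-fractional maximal function with respect to the doubling measure $d\omega$ applied to $|f|^r$. Since $r<p<n/\alpha$, the fractional parameter $\alpha r/n$ lies in $(0,1)$, and the classical Muckenhoupt--Wheeden-type Sobolev estimate for fractional maximal operators against doubling measures yields $M_{\alpha,r,\omega}:L^p(\omega)\to L^q(\omega)$ with $1/q=1/p-\alpha/n$. Applied to $f_1$,
$$\left(\int_{Q_0}(M_{\alpha,r,\omega}f_1)^q\,\omega\right)^{1/q}\le C\left(\int_{2Q_0}|f|^p\,\omega\right)^{1/p}\le C\,\omega(2Q_0)^{k/p}\|f\|_{L^{p,k}(\omega)},$$
and dividing by $\omega(Q_0)^{k/p}$ and using doubling of $\omega$ produces the required contribution to the $L^{q,kq/p}(\omega)$-norm.

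\textbf{Far part.} For $x\in Q_0$, any cube $Q\ni x$ meeting $(2Q_0)^c$ must, by a direct cube-containment computation, satisfy $Q_0\subset 5Q$, so $\omega(Q)\gtrsim\omega(Q_0)$ by doubling. H\"older's inequality with exponents $p/r$ and $(p/r)'$ gives
$$\int_Q|f|^r\omega\le\left(\int_Q|f|^p\omega\right)^{r/p}\omega(Q)^{1-r/p},$$
which, combined with the definition of the Morrey norm, leads to
$$\left(\frac{1}{\omega(Q)^{1-\alpha r/n}}\int_Q|f|^r\omega\right)^{1/r}\le \omega(Q)^{k/p-1/q}\|f\|_{L^{p,k}(\omega)}.$$
The hypothesis $k<p/q$ makes the exponent $k/p-1/q$ strictly negative, so taking the supremum over admissible $Q$ (using $\omega(Q)\gtrsim\omega(Q_0)$) yields the pointwise bound
$$M_{\alpha,r,\omega}f_2(x)\le C\,\omega(Q_0)^{k/p-1/q}\|f\|_{L^{p,k}(\omega)}\qquad(x\in Q_0).$$
Raising to the $q$-th power, integrating against $\omega$ over $Q_0$, and dividing by $\omega(Q_0)^{kq/p}$ gives exactly $\|f\|_{L^{p,k}(\omega)}$ on the right.

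\textbf{Main obstacle.} The genuine analytic input is the $L^p(\omega)\to L^q(\omega)$ mapping of $M_{\alpha,r,\omega}$, which is the fractional-maximal theorem transferred to the doubling measure $d\omega$; once that is in hand, everything else is H\"older, doubling, and the geometric fact that a cube meeting $(2Q_0)^c$ from inside $Q_0$ must contain $Q_0$ up to a dimensional dilation. The role of the hypothesis $k<p/q$ is exclusively to ensure decay of $\omega(Q)^{k/p-1/q}$ in the size of $Q$, which is what makes the far-part estimate summable.
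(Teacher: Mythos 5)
Your argument is correct and is essentially the standard Komori--Shirai/Wang route: the paper itself gives no proof of this lemma (it is quoted from \cite{wang}), and the cited proof proceeds by exactly your decomposition $f=f\chi_{2Q_0}+f\chi_{(2Q_0)^c}$, the $L^p(\omega)\to L^q(\omega)$ bound for $M_{\alpha,r,\omega}$ (which, as you note, reduces via H\"older to the $L^{p/r}(\omega)$-boundedness of $M_\omega$, using $r<p$ and the doubling of $\omega\in A_\infty$), and the pointwise far-part estimate with the negative exponent $k/p-1/q$. The only cosmetic point is that the containment $Q_0\subset 5Q$ should be checked in the sup-norm geometry of cubes, which indeed works.
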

\remark{By checking the proof of Lemma \ref{2}, Lemma \ref{21}, Lemma \ref{3}, we know the three lemmas above still hold when $k=0$.}


\begin{lemma}\label{5}
Let $0<\alpha <n, 1<p<\frac{n}{\alpha},\frac{1}{q}=\frac{1}{p}-\frac{\alpha}{n}$ and $\omega^{q/p}\in A_1.$
 Then if $0\leq k<p/q$ and $r_\omega> \frac{1-k}{p/q-k}$, we have
$$||L^{-\alpha/2}f||_{L^{q,kq/p}(\omega^{q/p},\omega)}\leq C ||f||_{L^{p,k}(\omega)}.$$
\end{lemma}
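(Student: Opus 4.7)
The plan is to reduce the statement to the already-established mapping property of the classical fractional integral $I_\alpha$ (the second assertion of Lemma \ref{2}) via a pointwise domination.

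First I would establish the pointwise bound
\begin{equation*}
\bigl|L^{-\alpha/2}f(x)\bigr| \;\le\; C\,I_\alpha(|f|)(x), \qquad x\in R^n.
\end{equation*}
This follows from the definition of $L^{-\alpha/2}$ together with the Gaussian upper bound \eqref{1.1}: inserting $|p_t(x,y)|\le C t^{-n/2}e^{-c|x-y|^2/t}$ into
\begin{equation*}
L^{-\alpha/2}f(x) \;=\; \frac{1}{\Gamma(\alpha/2)}\int_0^\infty e^{-tL}(f)(x)\,t^{\alpha/2-1}\,dt,
\end{equation*}
using Fubini, and performing the standard change of variables $u = c|x-y|^2/t$ in the resulting $t$-integral. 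The $t$-integral becomes $\Gamma\bigl((n-\alpha)/2\bigr)\,(c|x-y|^2)^{(\alpha-n)/2}$ up to an absolute constant, which yields $C\,|x-y|^{\alpha-n}$ inside the $y$-integral and hence $C\,I_\alpha(|f|)(x)$.

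With the pointwise bound in hand, the proof is completed in one line: by Lemma \ref{2}, $I_\alpha$ is bounded from $L^{p,k}(\omega)$ into $L^{q,kq/p}(\omega^{q/p},\omega)$ under exactly the hypotheses $1<p<n/\alpha$, $1/q=1/p-\alpha/n$, $0\le k<p/q$, $\omega^{q/p}\in A_1$ and $r_\omega>(1-k)/(p/q-k)$. Taking $L^{q,kq/p}(\omega^{q/p},\omega)$-norms on both sides of the pointwise inequality yields
\begin{equation*}
\|L^{-\alpha/2}f\|_{L^{q,kq/p}(\omega^{q/p},\omega)} \;\le\; C\,\|I_\alpha(|f|)\|_{L^{q,kq/p}(\omega^{q/p},\omega)} \;\le\; C\,\|f\|_{L^{p,k}(\omega)}.
\end{equation*}

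There is really no main obstacle here; the only mildly technical step is the Fubini/change-of-variables computation producing the Riesz-potential kernel, and the fact that Lemma \ref{2} is explicitly stated to apply to $I_\alpha$ in addition to $M_{\alpha,1}$. The hypothesis $\omega^{q/p}\in A_1$ (which in particular implies $\omega^{q/p}\in A_\infty$) guarantees we are in the range where Lemma \ref{2} is applicable; the condition $r_\omega>(1-k)/(p/q-k)$ is inherited unchanged. Thus the proof amounts to \emph{pointwise domination plus Lemma \ref{2}}.
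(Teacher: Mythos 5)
Your proposal is correct and follows exactly the same route as the paper's proof: pointwise domination $|L^{-\alpha/2}f(x)|\le C\,I_\alpha(|f|)(x)$ via the Gaussian kernel bound, followed by an application of Lemma \ref{2} for $I_\alpha$. You actually supply more detail (the Fubini and change-of-variables computation) than the paper, which simply asserts the pointwise bound is "easy to check."
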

\begin{proof}
Since the semigroup $e^{-tL}$ has a kernel $p_t(x,y)$ which satisfies the upper bound \eqref{1.1},
it is easy to check that  $L^{-\alpha/2}(f)(x)\leq C I_\alpha(|f|)(x) $ for all $x\in R^n.$ Using the boundedness property of $I_\alpha$
on weighted Morrey space(see Lemma \ref{2}), we have
$$||L^{-\alpha/2}f||_{L^{q,kq/p}(\omega^{q/p},\omega)}\leq ||I_\alpha f||_{L^{q,kq/p}(\omega^{q/p},\omega)}  \leq C||f||_{L^{p,k}(\omega)},$$
where $1<p<\frac{n}{\alpha}$ and $\frac{1}{q}=\frac{1}{p}-\frac{\alpha}{n}.$

\end{proof}
\remark{Since  $I_\alpha$ is  weak type  $(1, n/(n-\alpha)).$ From the proof  Lemma \ref{5}, we can get
$L^{-\alpha/2}$ is also  weak  type $(1, n/(n-\alpha)).$ }
\begin{lemma}(\cite{ddsy})\label{6}
Assume that the semigroup $e^{-tL}$ has  a kernel $p_t(x,y)$ which satisfies the upper bound \eqref{1.1}. Then for $0<\alpha <n,$ the difference operator $L^{-\frac{\alpha}{2}}-e^{-tL}L^{-\frac{\alpha}{2}}$ has an associated kernel $K_{\alpha,t}(x,y)$ which satisfies
$$K_{\alpha,t}(x,y)\leq \frac{C}{|x-y|^{n-\alpha}}\frac{t}{|x-y|^2}$$
for some positive constant $C.$
\end{lemma}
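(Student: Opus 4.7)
The plan is to write the kernel of $L^{-\alpha/2} - e^{-tL} L^{-\alpha/2}$ as a single time integral using the defining formula for $L^{-\alpha/2}$ together with the semigroup identity $e^{-tL} e^{-sL} = e^{-(s+t)L}$, and then estimate it via the Gaussian bound \eqref{1.1}. Doing so gives
\begin{equation*}
K_{\alpha,t}(x,y) = \frac{1}{\Gamma(\alpha/2)} \int_0^\infty \bigl[p_s(x,y) - p_{s+t}(x,y)\bigr] s^{\alpha/2-1}\, ds.
\end{equation*}
A shift $u = s+t$ in the second summand then decomposes the right-hand side as
\begin{equation*}
\Gamma(\alpha/2)\, K_{\alpha,t}(x,y) = \int_0^t p_s(x,y)\, s^{\alpha/2-1}\, ds + \int_t^\infty p_u(x,y)\bigl[u^{\alpha/2-1} - (u-t)^{\alpha/2-1}\bigr] du,
\end{equation*}
and I would estimate each piece separately against the target bound $Ct\,|x-y|^{-(n-\alpha+2)}$.

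For the first piece, inserting \eqref{1.1} and substituting $v = |x-y|^2/s$ turns the integral into $|x-y|^{\alpha-n}\int_{|x-y|^2/t}^\infty v^{(n-\alpha)/2-1} e^{-cv}\, dv$. Since $v^{(n-\alpha)/2+1} e^{-cv}$ is bounded on $(0,\infty)$ (using $\alpha<n$), the integrand is dominated by $C/v^2$, whose tail integral on $[A,\infty)$ equals $C/A$; this produces exactly the desired $t/|x-y|^2$ factor. For the second piece I would split at $u = 2t$: on $\{u \geq 2t\}$, the mean value theorem together with $u-t \geq u/2$ yields $|u^{\alpha/2-1} - (u-t)^{\alpha/2-1}| \leq C t\, u^{\alpha/2-2}$, after which the substitution $v = |x-y|^2/u$ reduces matters to the convergent integral $\int_0^\infty v^{(n-\alpha)/2} e^{-cv}\, dv$. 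On $\{t \leq u \leq 2t\}$ I would bound $u^{\alpha/2-1}$ and $(u-t)^{\alpha/2-1}$ separately against the Gaussian to get a contribution of order $t^{(\alpha-n)/2} e^{-c|x-y|^2/(2t)}$, and then convert this Gaussian decay into polynomial decay by writing $|x-y|^{n-\alpha+2} = t^{(n-\alpha+2)/2} (|x-y|^2/t)^{(n-\alpha+2)/2}$ and invoking the elementary inequality $w^{N} \leq C_N e^{cw/2}$ with $w = |x-y|^2/t$.

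The main obstacle is the strip $\{t \leq u \leq 2t\}$ in the second piece: for $0 < \alpha < 2$ the factor $(u-t)^{\alpha/2-1}$ is singular at $u=t$, so the mean-value shortcut used on $\{u \geq 2t\}$ is unavailable, and one must sacrifice some of the Gaussian decay to recover the polynomial bound. Once that delicate exchange is in place, the three estimates combine to yield the stated pointwise bound on $K_{\alpha,t}$.
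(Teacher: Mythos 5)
Your argument is correct. Note that the paper itself offers no proof of this lemma: it is quoted from \cite{ddsy} (the same estimate is Lemma~2.2 type material in \cite{duongyan}), so there is no in-text argument to compare against. Your derivation is essentially the standard one from those references: represent the difference kernel as $\frac{1}{\Gamma(\alpha/2)}\int_0^\infty (p_s(x,y)-p_{s+t}(x,y))\,s^{\alpha/2-1}\,ds$, split at $s=t$ (and again at $u=2t$), and trade Gaussian decay for the polynomial factor $t/|x-y|^2$. All three of your estimates check out: the substitution $v=|x-y|^2/s$ in the piece over $(0,t)$ produces $|x-y|^{\alpha-n}\int_{|x-y|^2/t}^\infty v^{(n-\alpha)/2-1}e^{-cv}\,dv\le C\,|x-y|^{\alpha-n}\,t/|x-y|^2$ because $v^{(n-\alpha)/2+1}e^{-cv}$ is bounded (here $\alpha<n$ is used); the mean-value bound $|u^{\alpha/2-1}-(u-t)^{\alpha/2-1}|\le Ct\,u^{\alpha/2-2}$ is legitimate on $\{u\ge 2t\}$ since $u-t\ge u/2$ there, and the resulting integral converges because $(n-\alpha)/2>-1$; and on the strip $\{t\le u\le 2t\}$ the two singular factors integrate to $Ct^{\alpha/2}$ (using $\alpha>0$), after which $t^{(\alpha-n)/2}e^{-c|x-y|^2/(2t)}\le Ct\,|x-y|^{-(n-\alpha+2)}$ follows from $w^{(n-\alpha+2)/2}\le C_N e^{cw/2}$ with $w=|x-y|^2/t$, exactly as you say. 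The only step worth making explicit in a written version is the identification of the kernel of $e^{-tL}L^{-\alpha/2}$ with $\frac{1}{\Gamma(\alpha/2)}\int_0^\infty p_{s+t}(x,y)\,s^{\alpha/2-1}\,ds$, which requires composing kernels via the semigroup law $e^{-tL}e^{-sL}=e^{-(s+t)L}$ and an application of Fubini; the Gaussian upper bound \eqref{1.1} makes all the relevant integrals absolutely convergent for $x\ne y$, so this is routine.
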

\begin{lemma}\label{7}
Assume that the semigroup $e^{-tL}$ has a kernel $p_t(x,y)$ which satisfies the upper bound \eqref{1.1},
 and let $b\in BMO(\omega), \omega\in A_1.$ Then, for every function $f\in L^p(R^n), p>1 $ and for all $ x\in R^n, $ we have
 $$\sup_{x\in Q} \frac{1}{|Q|}\int_Q|e^{-t_QL}(b(y)-b_Q)f(y)|dy\leq C ||b||_{BMO(\omega)}\omega(x)M_{r,\omega}(f)(x).$$
where $M_{r,\omega}(f)(x)=M_\omega(|f|^r)^{\frac 1r}(x)$ with $1<r<\infty.$
\end{lemma}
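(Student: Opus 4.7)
The plan is to exploit the integral representation of $e^{-t_QL}$ together with the Gaussian kernel bound \eqref{1.1}, and to split the integrand into dyadic annuli so that the weighted BMO machinery can be applied shell by shell. Fix a cube $Q$ with $x\in Q$ and $y\in Q$. Writing
$$e^{-t_QL}\bigl((b-b_Q)f\bigr)(y)=\int_{R^n}p_{t_Q}(y,z)(b(z)-b_Q)f(z)\,dz,$$
and using \eqref{1.1} with $t_Q\approx |Q|^{2/n}$, I would split $R^n$ into $A_0=2Q$ and $A_k=2^{k+1}Q\setminus 2^kQ$ for $k\ge 1$ to obtain
$$|e^{-t_QL}((b-b_Q)f)(y)|\le C\sum_{k=0}^\infty 2^{(k+1)n}e^{-c\,4^k}\,\frac{1}{|2^{k+1}Q|}\int_{2^{k+1}Q}|b(z)-b_Q|\,|f(z)|\,dz,$$
where the Gaussian factor is produced by $|y-z|\gtrsim 2^k\ell(Q)$ on $A_k$.

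For each $k$, I would decompose $b-b_Q=(b-b_{2^{k+1}Q})+(b_{2^{k+1}Q}-b_Q)$. On the first piece, apply H\"older's inequality with exponents $r$ and $r'$, using the key identity $1-r'=-r'/r$:
$$\frac{1}{|2^{k+1}Q|}\int_{2^{k+1}Q}|b-b_{2^{k+1}Q}||f|\le\frac{1}{|2^{k+1}Q|}\left(\int_{2^{k+1}Q}|f|^r\omega\right)^{1/r}\left(\int_{2^{k+1}Q}|b-b_{2^{k+1}Q}|^{r'}\omega^{1-r'}\right)^{1/r'}.$$
The first factor is at most $\omega(2^{k+1}Q)^{1/r}M_{r,\omega}(f)(x)$ since $x\in 2^{k+1}Q$; the second is at most $C||b||_{BMO(\omega)}\,\omega(2^{k+1}Q)^{1/r'}$ by Garc\'ia-Cuerva's equivalence of the $BMO_p(\omega)$ norms (which is available precisely because $\omega\in A_1$). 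Multiplying and using $A_1$ in the form $\omega(2^{k+1}Q)/|2^{k+1}Q|\le C\omega(x)$ produces the bound $C||b||_{BMO(\omega)}\omega(x)M_{r,\omega}(f)(x)$.

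For the second piece, a standard telescoping argument combined with $A_1$ gives $|b_{2^{k+1}Q}-b_Q|\le C(k+1)||b||_{BMO(\omega)}\omega(x)$, while H\"older together with $A_1\subset A_r$ (which yields $(\frac{1}{|B|}\int_B\omega^{-r'/r})^{1/r'}\le C(\omega(B)/|B|)^{-1/r}$) bounds $\frac{1}{|2^{k+1}Q|}\int_{2^{k+1}Q}|f|\le CM_{r,\omega}(f)(x)$. Summing the resulting series is harmless since $\sum_k(k+1)\,2^{(k+1)n}e^{-c\,4^k}<\infty$. Averaging over $y\in Q$ and taking the supremum over cubes containing $x$ then completes the proof.

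The main obstacle is the careful coordination of weighted quantities: the identity $1-r'=-r'/r$ is essential in order to align the exponent appearing in the $BMO_{r'}(\omega)$ seminorm with the weight $\omega^{-r'/r}$ naturally produced by H\"older, and one must invoke $A_1$ only on cubes actually containing $x$ so that the bound $\omega(2^{k+1}Q)/|2^{k+1}Q|\le C\omega(x)$ is legitimate. The linear growth $(k+1)$ coming from the telescoped BMO constants is easily absorbed by the super-geometric Gaussian decay, but the bookkeeping of the several weighted factors throughout each annulus is the part that requires the most care.
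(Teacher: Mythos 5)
Your proposal is correct and follows essentially the same route as the paper: the same splitting into $2Q$ plus dyadic annuli via the Gaussian bound, the same decomposition $b-b_Q=(b-b_{2^{k+1}Q})+(b_{2^{k+1}Q}-b_Q)$, H\"older with exponents $r,r'$ against the weight $\omega^{1-r'}$ together with Garc\'ia-Cuerva's equivalence of the $BMO_p(\omega)$ norms, the $A_1$ bound $\omega(2^{k+1}Q)/|2^{k+1}Q|\leq C\omega(x)$, and the telescoping estimate $|b_{2^{k+1}Q}-b_Q|\leq C(k+1)\|b\|_{BMO(\omega)}\omega(x)$ absorbed by the Gaussian decay. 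No substantive differences.
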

\begin{proof}
For any $f\in L^p(R^n), 1<p<\infty$ and $x\in Q.$  We have
\begin{equation*}
\begin{split}
&\frac{1}{|Q|}\int_Q |e^{-t_Q L}((b(\cdot)-b_Q)f)(y)|dy\\
&\leq \frac{1}{|Q|}\int_Q \int_{R^n} |p_{t_Q}(y,z)||(b(z)-b_Q)f(z)|dzdy\\
&\leq \frac{1}{|Q|}\int_Q \int_{2Q} |p_{t_Q}(y,z)||(b(z)-b_Q)f(z)|dzdy\\
&\quad + \frac{1}{|Q|}\int_Q \sum_{k=1}^\infty\int_{2^{k+1}Q\setminus 2^{k}Q} |p_{t_Q}(y,z)||(b(z)-b_Q)f(z)|dzdy\\
&\doteq \mathcal{M}+ \mathcal{N}.
\end{split}
\end{equation*}
Since for any $y\in Q$ and $z\in 2Q.$ We have
$$ |p_{t_Q}(y,z)|\leq C t^{-\frac{n}{2}}_{Q}\leq C \frac{1}{|2Q|}.$$
Thus,
\begin{equation*}
\begin{split}
\mathcal{M}&\leq C\frac{1}{|2Q|} \int_{2Q}|(b(z)-b_Q)f(z)|dz\\
&\leq C \frac{1}{|2Q|}\left( \int_{2Q}||b(z)-b_Q|^{r^{'}}\omega(z)^{1-r^{'}} dz\right)^{\frac{1}{r^{'}}} \left( \int_{2Q}|f(z)|^r\omega(z)dz\right)^{\frac{1}{r}}\\
&\leq C ||b||_{BMO(\omega)}\frac{\omega(2Q)}{|2Q|} \left( \frac{1}{\omega(2Q)}\int_{2Q}|f(z)|^r\omega(z)dz\right)^{\frac{1}{r}}\\
&\leq C ||b||_{BMO(\omega)}\omega(x) M_{r,\omega}f(x).
\end{split}
\end{equation*}
Moreover, for any $y\in Q$ and $z\in 2^{k+1}Q\setminus 2^{k}Q$, we have $|y-z|\geq 2^{k-1}r_Q$  and $|p_{t_Q}|\leq C \frac{e^{-c2^{2(k-1)}}2^{(k+1)n}}{|2^{k+1}Q|}$.
\begin{equation*}
\begin{split}
\mathcal{N}&=\frac{1}{|Q|}\int_Q \sum_{k=1}^\infty\int_{2^{k+1}Q\setminus 2^{k}Q} |p_{t_Q}(y,z)||(b(z)-b_Q)f(z)|dzdy\\
&\leq C \sum_{k=1}^\infty \frac{e^{-c2^{2(k-1)}}2^{(k+1)n}}{|2^{k+1}Q|}\int_{2^{k+1}Q} |(b(z)-b_Q)f(z)|dz\\
&\leq C \sum_{k=1}^\infty \frac{e^{-c2^{2(k-1)}}2^{(k+1)n}}{|2^{k+1}Q|}\int_{2^{k+1}Q} |(b(z)-b_{2^{k+1}Q})f(z)|dz\\
&\quad +C \sum_{k=1}^\infty \frac{e^{-c2^{2(k-1)}}2^{(k+1)n}}{|2^{k+1}Q|}\int_{2^{k+1}Q} |(b_{2^{k+1}Q}-b_{2Q})f(z)|dz\\
&\doteq \mathcal{N}_1+\mathcal{N}_2.
\end{split}
\end{equation*}
We  estimate each term in turn. For $\mathcal{N}_1,$ we apply H\"{o}lder's inequalities with exponent $r$. Then we have

\begin{equation*}
\begin{split}
\mathcal{N}_1&\leq C \sum_{k=1}^\infty \frac{e^{-c2^{2(k-1)}}2^{(k+1)n}}{|2^{k+1}Q|}\left( \int_{2^{k+1}Q}||b(z)-b_Q|^{r^{'}}\omega(z)^{1-r^{'}} dz\right)^{\frac{1}{r^{'}}} \left( \int_{2^{k+1}Q}|f(z)|^r\omega(z)dz\right)^{\frac{1}{r}}\\
&\leq C \sum_{k=1}^\infty 2^{(k+1)n}e^{-c2^{2(k-1)}}||b||_{BMO(\omega)}\frac{\omega(2^{k+1}Q)}{|2^{k+1}Q|} \left( \frac{1}{\omega(2^{k+1}Q)}\int_{2^{k+1}Q}|f(z)|^r\omega(z)dz\right)^{\frac{1}{r}}\\
&\leq C ||b||_{BMO(\omega)}\omega(x) M_{r,\omega}f(x).
\end{split}
\end{equation*}

Since $\omega\in A_1,$ then $|b_{2^{k+1Q}}-b_{2Q}|\leq C k \,\omega (x)||b||_{BMO(\omega)}.$  This fact together with the H\"{o}lder inequality implies
\begin{equation*}
\begin{split}
\mathcal{N}_2&\leq C \sum_{k=1}^\infty 2^{(k+1)n}e^{-c2^{2(k-1)}}\frac{k}{|2^{k+1}Q|}\omega(x)||b||_{BMO(\omega)}\int_Q|f(z)|dz\\
&\leq C \sum_{k=1}^\infty k2^{(k+1)n}e^{-c2^{2(k-1)}}\omega(x)||b||_{BMO(\omega)}\left(\frac{1}{|2^{k+1}Q|}\int_{2^{k+1}Q} |f(z)|^rdz\right)^{\frac{1}{r}}\\
&= C \sum_{k=1}^\infty k2^{(k+1)n}e^{-c2^{2(k-1)}}\omega(x)||b||_{BMO(\omega)} \left(\frac{\omega(2^{k+1}Q)}{|2^{k+1}Q|}\frac{1}{\omega(2^{k+1}Q)}\int_{2^{k+1}Q} |f(z)|^rdz\right)^{\frac{1}{r}}\\
&\leq C \sum_{k=1}^\infty k2^{(k+1)n}e^{-c2^{2(k-1)}}\omega(x)||b||_{BMO(\omega)}\left(\frac{1}{\omega(2^{k+1}Q)}\int_{2^{k+1}Q} |f(z)|^r \omega(x)dz\right)^{\frac{1}{r}}\\
&\leq C ||b||_{BMO(\omega)}\omega(x) M_{r,\omega}f(x).
\end{split}
\end{equation*}

Then Lemma \ref{7} is proved.

\end{proof}

\begin{lemma}\label{8}
Let $0<\alpha <n,$ $\omega \in A_1$ and $b\in BMO(\omega).$ Then
for all $r>1$ and for all $x\in R^n,$ we have
\begin{equation}\label{2.1}
 \begin{split}
   &M^{\sharp}_L([b, L^{-\frac{\alpha}{2}}]f)(x)\\
  &\leq  C ||b||_{BMO(\omega)}\left(\omega(x)M_{r,\omega}(L^{-\frac{\alpha}{2}}f)(x)+\omega(x)^{1-\frac{\alpha}{n}}M_{\alpha, r,\omega}(f)(x)+\omega(x)M_{\alpha, 1}(f)(x)\right).
\end{split}
\end{equation}

\end{lemma}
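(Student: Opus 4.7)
Fix $x \in R^n$ and a cube $Q \ni x$; the aim is to bound $\frac{1}{|Q|}\int_Q |[b, L^{-\alpha/2}]f(y) - e^{-t_QL}([b, L^{-\alpha/2}]f)(y)|\,dy$ uniformly over such $Q$. Using the identity $[b, L^{-\alpha/2}]f = (b-b_Q)L^{-\alpha/2}f - L^{-\alpha/2}((b-b_Q)f)$, the splitting $f = f_1 + f_2$ with $f_1 = f\chi_{2Q}$ and $f_2 = f\chi_{(2Q)^c}$, and the functional-calculus commutation $e^{-t_QL}L^{-\alpha/2} = L^{-\alpha/2}e^{-t_QL}$, the integrand decomposes as $\mathrm{I}(y) + \mathrm{II}(y) + \mathrm{III}(y)$, where
\begin{align*}
\mathrm{I}(y) &= (b(y)-b_Q)L^{-\alpha/2}f(y) - e^{-t_QL}((b-b_Q)L^{-\alpha/2}f)(y),\\
\mathrm{II}(y) &= -(L^{-\alpha/2} - e^{-t_QL}L^{-\alpha/2})((b-b_Q)f_1)(y),\\
\mathrm{III}(y) &= -(L^{-\alpha/2} - e^{-t_QL}L^{-\alpha/2})((b-b_Q)f_2)(y).
\end{align*}

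For $\mathrm{I}$, I apply H\"{o}lder's inequality with exponents $(r', r)$, pairing $\omega^{1-r'}$ in the first factor (so that the $BMO(\omega)$ seminorm appears) with $\omega$ in the second. Combined with the definition of $||b||_{BMO(\omega)}$ and the $A_1$ inequality $\omega(Q)/|Q| \leq C\omega(x)$, this yields $\frac{1}{|Q|}\int_Q |(b(y)-b_Q)L^{-\alpha/2}f(y)|\,dy \leq C||b||_{BMO(\omega)}\omega(x) M_{r,\omega}(L^{-\alpha/2}f)(x)$. The remaining term $\frac{1}{|Q|}\int_Q|e^{-t_QL}((b-b_Q)L^{-\alpha/2}f)(y)|\,dy$ is precisely the quantity estimated by Lemma \ref{7} with $L^{-\alpha/2}f$ in place of $f$, producing the same bound.

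For $\mathrm{II}$ (the local part) I would treat $L^{-\alpha/2}((b-b_Q)f_1)$ and $e^{-t_QL}L^{-\alpha/2}((b-b_Q)f_1)$ separately via Kolmogorov's inequality. By the remark following Lemma \ref{5}, $L^{-\alpha/2}$ is of weak type $(1, n/(n-\alpha))$; since the Gaussian bound \eqref{1.1} implies $e^{-tL}$ is a uniform $L^1$-contraction, the commutation $e^{-tL}L^{-\alpha/2} = L^{-\alpha/2}e^{-tL}$ shows $e^{-t_QL}L^{-\alpha/2}$ is also of weak type $(1, n/(n-\alpha))$ uniformly in $t_Q$. Kolmogorov's inequality with exponent $1$ then gives $\frac{1}{|Q|}\int_Q |\mathrm{II}(y)|\,dy \leq C|Q|^{\alpha/n - 1}\int_{2Q}|b(z)-b_Q||f(z)|\,dz$, and a further application of H\"{o}lder with $(r', r)$, the $BMO(\omega)$ bound on $2Q$, and $\omega(2Q)/|2Q| \leq C\omega(x)$ converts this into $C||b||_{BMO(\omega)}\omega(x)^{1-\alpha/n} M_{\alpha, r, \omega}f(x)$.

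For $\mathrm{III}$ (the far part) I would use the kernel bound of Lemma \ref{6}: on $\{(y,z): y \in Q,\, z \in 2^{k+1}Q \setminus 2^kQ\}$ one has $|y-z| \geq 2^{k-1}r_Q$ and $t_Q = r_Q^2$, so $K_{\alpha, t_Q}(y, z) \leq C\,2^{-2k}|2^kQ|^{\alpha/n-1}$. Decomposing $|b(z)-b_Q| \leq |b(z)-b_{2^{k+1}Q}| + |b_{2^{k+1}Q}-b_Q|$ and using the $A_1$ consequence $|b_{2^{k+1}Q}-b_Q| \leq Ck\,\omega(x)||b||_{BMO(\omega)}$ (as in the proof of Lemma \ref{7}), the first subterm contributes $C\sum_k 2^{-2k}||b||_{BMO(\omega)}\omega(x)^{1-\alpha/n}M_{\alpha, r, \omega}f(x)$ after H\"{o}lder and the $BMO(\omega)$ estimate on the dilated cube, while the second subterm contributes $C\sum_k k\,2^{-2k}||b||_{BMO(\omega)}\omega(x)M_{\alpha, 1}f(x)$; both geometric sums converge. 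The main obstacle is Part $\mathrm{II}$, where one must first establish (from \eqref{1.1} and commutation) the weak-type $(1, n/(n-\alpha))$ boundedness of $e^{-t_QL}L^{-\alpha/2}$ uniformly in $t_Q$, in order to invoke Kolmogorov's inequality in a way that, after H\"{o}lder and the $BMO(\omega)$ definition, matches the fractional weighted maximal function $M_{\alpha, r, \omega}$ on the right-hand side.
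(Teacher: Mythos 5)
Your proof is correct and follows essentially the same route as the paper's: the same decomposition of $[b,L^{-\alpha/2}]f - e^{-t_QL}[b,L^{-\alpha/2}]f$ into a local $(b-b_Q)L^{-\alpha/2}f$ part (H\"older in the $BMO(\omega)$ pairing plus Lemma \ref{7}), a local $f_1$ part (Kolmogorov plus the weak type $(1,n/(n-\alpha))$ of $L^{-\alpha/2}$), and a far part handled with the kernel bound of Lemma \ref{6} and the splitting $b-b_Q=(b-b_{2^{k+1}Q})+(b_{2^{k+1}Q}-b_Q)$ together with $|b_{2^{k+1}Q}-b_Q|\leq Ck\,\omega(x)\|b\|_{BMO(\omega)}$. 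The only cosmetic difference is that you treat $e^{-t_QL}L^{-\alpha/2}((b-b_Q)f_1)$ via the uniform weak-type boundedness of the composition $L^{-\alpha/2}e^{-t_QL}$, while the paper reduces this term to the estimate for $L^{-\alpha/2}((b-b_Q)f_1)$ using the pointwise Gaussian bound on $p_{t_Q}$; both are valid.
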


\begin{proof}
For any given $x\in R^n,$ fix a ball $Q=Q(x_0, r_B)$ which contain $x.$ we decompose $f=f_1+f_2, $ where $f_1=f\chi_{2Q}.$ Observe that
$$[b, L^{-\frac{\alpha}{2}}]f(x)=(b-b_Q)L^{-\frac{\alpha}{2}}f-L^{-\frac{\alpha}{2}}(b-b_Q)f_1-L^{-\frac{\alpha}{2}}(b-b_Q)f_2$$
and
$$e^{-t_Q L}([b,L^{-\frac{\alpha}{2}}]f)=e^{-t_Q L}[(b-b_Q)L^{-\frac{\alpha}{2}}f-L^{-\frac{\alpha}{2}}(b-b_Q)f_1-L^{-\frac{\alpha}{2}}(b-b_Q)f_2].$$
Then
\begin{equation*}
\begin{split}
&\frac{1}{|Q|}\int_Q |[b,L^{-\frac{\alpha}{2}}]f(y)-e^{-t_Q L}[b,L^{-\frac{\alpha}{2}}]f(y)|dy\\
&\leq \frac{1}{|Q|}\int_Q |(b(y)-b_Q)L^{-\frac{\alpha}{2}}f(y)|dy+ \frac{1}{|Q|}\int_Q |L^{-\frac{\alpha}{2}}(b(y)-b_Q)f_1)(y)|dy\\
& \quad +\frac{1}{|Q|}\int_Q |e^{-t_QL}((b(y)-b_Q)L^{-\frac{\alpha}{2}}f)(y)|dy+ \frac{1}{|Q|}\int_Q |e^{-t_QL}L^{-\frac{\alpha}{2}}((b(y)-b_Q)f_1(y))|dy\\
& \quad +\frac{1}{|Q|}\int_Q |(L^{-\frac{\alpha}{2}}-e^{-t_QL}L^{-\frac{\alpha}{2}})((b-b_Q)f_2)(y)|dy\\
&\doteq I +II+ III +IV+V.
\end{split}
\end{equation*}
We estimate each term separately.
\par Since $\omega\in A_1,$ then it follows from H\"{o}lder's inequality
\begin{equation*}
\begin{split}
I&\leq \frac{1}{|Q|}\int_Q |(b(y)-b_Q)L^{-\frac{\alpha}{2}}f(y)|dy\\
 &\leq\frac{1}{|Q|}\left(\int_Q |(b(y)-b_Q)|^{r^{'}}\omega^{1-r^{'}}dy\right)^{\frac{1}{r^{'}}}\left(\int_Q |L^{-\frac{\alpha}{2}}f(y)|^{r}\omega(y) dy\right)^{\frac{1}{r}}\\
&\leq C||b||_{BMO(\omega)}\frac{\omega(Q)}{|Q|}\left(\frac{1}{\omega(Q)}\int_Q |L^{-\frac{\alpha}{2}}f(y)|^{r}\omega(y) dy\right)^{\frac{1}{r}}\\
& \leq C||b||_{BMO(\omega)}\omega(x)M_{r,\omega}(L^{-\frac{\alpha}{2}}f)(x).
\end{split}
\end{equation*}

Applying Kolmogorov's inequality(see\cite{francia}, page 485), H\"{o}lder's inequality and  the continuity  of $L^{-\alpha/2}$, we thus have
\begin{equation*}
\begin{split}
II &=  \frac{1}{|Q|}\int_Q |L^{-\frac{\alpha}{2}}(b(y)-b_Q)f_1)(y)|dy\\
& \leq C\frac{1}{|Q|^{1-\frac{\alpha}{n}}}||L^{-\frac{\alpha}{2}}(b(y)-b_{2Q})f_1||_{L^{\frac{n}{n-\alpha},\infty}}\\
&\leq C  \frac{1}{|Q|^{1-\frac{\alpha}{n}}}\int_Q (b(y)-b_{2Q})f_1(y)dy\\
&\leq C  \frac{1}{|Q|^{1-\frac{\alpha}{n}}} \left(\int_Q |b(y)-b_{2Q}|^{r^{'}}\omega^{1-r^{'}} dy \right)^{\frac{1}{r^{'}}} \left( \int_Q |f(y)|^r\omega(y)dy\right)^{\frac{1}{r}}\\
& \leq C||b||_{BMO(\omega)}\frac{w(2Q)^{1-\frac{\alpha}{n}}}{|2Q|^{1-\frac{\alpha}{n}}} \left( \frac{1}{w(2Q)^{1-\frac{r\alpha}{n}}} \int_Q |f(y)|^r\omega(y)dy\right)^{\frac{1}{r}}\\
& \leq C||b||_{BMO(\omega)}\omega(x)^{1-\frac{\alpha}{n}}M_{\alpha,r,\omega}(f)(x).
\end{split}
\end{equation*}
By Lemma \ref{7}, we have
$$III \leq C ||b||_{BMO(\omega)}\omega(x)M_{r,\omega}(L^{-\frac{\alpha}{2}}f)(x).$$
For IV, use the estimate obtained in $II$, we get

\begin{equation*}
\begin{split}
IV&\leq  \frac{1}{|Q|}\int_Q \int_{2Q}|p_{tQ}(y,z)||b(z)-b_Q||f(z)|dzdy\\
&\leq  \frac{1}{|2Q|} \int_{2Q}L ^{-\frac{\alpha}{2}}((b(z)-b_Q))f(z)|dz\\
& \leq C||b||_{BMO(\omega)}\omega(x)^{1-\frac{\alpha}{n}}M_{\alpha,r,\omega}(f)(x).
\end{split}
\end{equation*}

By virtue of Lemma \ref{6}, we have

\begin{equation*}
\begin{split}
V&\leq \frac{1}{|Q|}\int_Q \int_{(2Q)^c}|K_{\alpha,tQ}(y,z)||(b(z)-b_Q)f(z)|dzdy\\
&\leq C \sum_{k=1}^\infty \int_{2^kr_Q\leq |x_0-z|<2^{k+1}r_Q}\frac{1}{|x_0-z|^{n-\alpha}}\frac{r_Q}{|x_0-z|}|(b(z)-b_Q)f(z)|dz\\
&\leq C \sum_{k=1}^\infty 2^{-k} \frac{1}{|2^{k+1}Q|^{1-\frac{\alpha}{n}}}\int_{2^{k+1}Q} |(b(z)-b_Q)f(z)|dz\\
 &\leq C \sum_{k=1}^\infty 2^{-k} \frac{1}{|2^{k+1}Q|^{1-\frac{\alpha}{n}}}\int_{2^{k+1}Q} |(b(z)-b_{2^{k+1}Q})f(z)|dz\\
 &\quad + C \sum_{k=1}^\infty 2^{-k}(b_{2^{k+1}Q}-b_Q) \frac{1}{|2^{k+1}Q|^{1-\frac{\alpha}{n}}}\int_{2^{k+1}Q} |f(z)|dz\\
 &\doteq VI +VII.
\end{split}
\end{equation*}

For VI, apply the same arguments as in $II$, we get
\begin{equation*}
\begin{split}
VI&\leq C ||b||_{BMO(\omega)} \sum_{k=1}^\infty 2^{-k}\omega(x)^{1-\frac{\alpha}{n}}M_{\alpha,r,\omega}(f)(x)\\
&\leq C ||b||_{BMO(\omega)} \omega(x)^{1-\frac{\alpha}{n}}M_{\alpha,r,\omega}(f)(x).
\end{split}
\end{equation*}

Since $\omega \in A_1,$ then
$|b_{2^{k+1Q}}-b_{2Q}|\leq C k \,\omega (x)||b||_{BMO(\omega)}.$
Thus,
\begin{equation*}
\begin{split}
VII&\leq  C ||b||_{BMO(\omega)} \sum_{k=1}^\infty 2^{-k}k\, \omega(x)M_{\alpha,1}(f)(x)\\
&\leq  C ||b||_{BMO(\omega)} \omega(x) M_{\alpha,1}(f)(x).
\end{split}
\end{equation*}
Then
\begin{equation*}
\begin{split}
V&\leq C ||b||_{BMO(\omega)}\left( \omega(x)^{1-\frac{\alpha}{n}}M_{\alpha,r,\omega}(f)(x)+\omega(x) M_{\alpha,1}(f)(x)\right).
\end{split}
\end{equation*}
Combining the above estimates I, II, III, IV, and V, we get \eqref{2.1}. The proof of Lemma \ref{8} is complete.
\end{proof}

\section{PROOFS OF THE MAIN RESULTS}
In this section we prove our main results.
We start with the proof of Theorem \ref{thm1.1}.

\begin{proof}
 $(a)\Rightarrow (b):$  Applying Lemma \ref{1} and Lemma \ref{8}, we get
\begin{equation*}
\begin{split}
&||[b, L^{-\frac{\alpha}{2}}]f||_{L^{q,kq/p}(\omega^{1-(1-\alpha/n)q},\omega)}\\
&\leq ||M^{\sharp}_L([b, L^{-\frac{\alpha}{2}}]f)||_{L^{q,kq/p}(\omega^{1-(1-\alpha/n)q},\omega)}\\
&\leq C  ||b||_{BMO(\omega)}\big( || \omega(x)M_{r,\omega}(L^{-\frac{\alpha}{2}}f)||_{L^{q,kq/p}(\omega^{1-(1-\alpha/n)q},\omega)}\\
&\quad + ||  \omega^{1-\frac{\alpha}{n}}M_{\alpha,r,\omega}f||_{L^{q,kq/p}(\omega^{1-(1-\alpha/n)q},\omega)}\\
&\quad +|| \omega M_{\alpha,1}f||_{L^{q,kq/p}(\omega^{1-(1-\alpha/n)q},\omega)}\big)\\
&\leq C  ||b||_{BMO(\omega)}\big(|| M_{r,\omega}(L^{-\frac{\alpha}{2}}f)||_{L^{q,kq/p}(\omega^{q/p},\omega)} +||  M_{\alpha,r,\omega}f||_{L^{q,kq/p}(\omega)}\\
&\quad +||  M_{\alpha,1}f||_{L^{q,kq/p}(\omega^{q/p},\omega)}\big).\\
\end{split}
\end{equation*}

Since $0\leq k<p/q, \omega^{q/p}\in A_1$ and $r_\omega > \frac{1-k}{p/q-k},$ by making use of Lemma \ref{2}, Lemma \ref{21} and Lemma \ref{3}, then we obtain

\begin{equation*}
\begin{split}
&||[b, L^{-\frac{\alpha}{2}}]f||_{L^{q,kq/p}(\omega^{1-(1-\alpha/n)q},\omega)}\\
&\leq C  ||b||_{BMO(\omega)}\big(||L^{-\frac{\alpha}{2}}f||_{L^{q,kq/p}(\omega^{q/p},\omega)}+||f||_{L^{p,k}(\omega)}\big)\\
&\leq C  ||b||_{BMO(\omega)}||f||_{L^{p,k}(\omega)}
\end{split}
\end{equation*}
The last inequality follows from Lemma \ref{5}.
This completes the proof of  $(a)\Rightarrow (b).$
\par
$(b)\Rightarrow (a):$
  Let $L = -\Delta$ be the Laplacian on $R^n$,
then $L^{-{\alpha/ 2}}$ is the classical fractional
integral $I_\alpha$. Choose $Z_0 \in R^n$ so that $|Z_0|=3.$ For $x\in Q(Z_0,2),$ $|x|^{-\alpha+n}$ can be written as the absolutely convergent Fourier series,
$|x|^{-\alpha+n}=\sum_{m\in Z_n}a_m e^{i<\nu_m,x>}$ with $\sum_m |a_m|<\infty$ since $|x|^{-\alpha+n} \in C^\infty(Q(Z_0,2))$. For any $x_0\in R^n$
and $\rho>0,$ let $Q=Q(x_0,\rho)$ and $Q_{Z_0}=Q(x_0+Z_0 \rho,\rho),$
\begin{equation*}
\begin{split}
\int_Q |b(x)-b_{Q_{Z_0}}|dx&=\frac{1}{|Q_{Z_0}|}\int_Q |\int_{Q_{Z_0}}(b(x)-b(y))dy|dx\\
&=\frac{1}{\rho^n}\int_Q s(x)\left(\int_{Q_{Z_0}}(b(x)-b(y))|x-y|^{-\alpha+n}|x-y|^{n-\alpha}dy \right)dx,
\end{split}
\end{equation*}
where $s(x)=\overline{\texttt{sgn}(\int_{Q_{Z_0}}(b(x)-b(y))dy)} .$ Fix $x\in Q$ and $y\in Q_{Z_0}$ we have
$\frac{y-x}{\rho}\in Q(Z_0,2)$, hence, we have
\begin{equation*}
\begin{split}
&\frac{\rho^{-\alpha+n}}{\rho^n}\int_Q s(x)\left(\int_{Q_{Z_0}}(b(x)-b(y))|x-y|^{-\alpha+n}(\frac{|x-y|}{\rho})^{n-\alpha}dy \right)dx\\
&= \rho^{-\alpha}\sum_{m\in Z^n}a_m \int_Q s(x)\left(\int_{Q_{Z_0}}(b(x)-b(y))|x-y|^{-\alpha+n}e^{i<\nu_m,y/\rho>}dy \right)e^{-i<\nu_m,x/\rho>}dx\\
&\leq \rho^{-\alpha} \left|\sum_{m\in Z^n}|a_m|  \int_Q s(x)[b,L^{-{\alpha/ 2}}](\chi_{Q_{Z_0}}e^{i<\nu_m,\cdot/\rho>})\chi_Q(x)e^{-i<\nu_m,x/\rho>}dx\right| \\
&\leq \rho^{-\alpha} \sum_{m\in Z^n}|a_m| ||[b,L^{-{\alpha/ 2}}](\chi_{Q_{Z_0}}e^{i<\nu_m,\cdot/\rho>})||_{L^{q,0}(\omega^{1-(1-\alpha/ n)q},\omega)} \left(\int_Q \omega(x)^{q^{'}[(1-\alpha/ n)-1/q]}dx\right)^{\frac{1}{q^{'}}}\\
&\leq C\rho^{-\alpha} \sum_{m\in Z^n}|a_m| ||\chi_{Q_{Z_0}}||_{L^{p,0}(\omega)}\left(\int_Q \omega(x)^{q^{'}(1/q^{'}-\alpha/ n)}dx\right)^{\frac{1}{q^{'}}}\\
&\leq C \omega(Q)^{1/p+1/q^{'}-\alpha/ n}\\
&\leq C \omega(Q).
\end{split}
\end{equation*}

This implies $b\in BMO(\omega).$ Thus Theorem \ref{thm1.1} is proved.
\end{proof}

Similarly, to prove Theorem \ref{thm1.2} we  need the following lemmas.


\begin{lemma}\label{10}
Let $0<\alpha <n,1<r,s<\infty$ such that $rs<p<n/\alpha$ and $b\in BMO.$ Then
for all $r>1$ and for all $x\in R^n,$ we have
$$M^{\sharp}_L([b,L^{-{\alpha/ 2}}]f)(x)\leq C ||b||_{BMO}\left(M_{r}(L^{-\frac{\alpha}{2}}f)(x)+M_{\alpha, rs}(f)(x)\right).$$
 where $M_{r}(f)(x)=M(|f|^r)^{\frac 1r}(x)$.
\end{lemma}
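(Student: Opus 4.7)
The plan is to mimic the argument of Lemma \ref{8}, replacing $BMO(\omega)$ estimates by their unweighted analogs. Fix $x\in R^n$ and a cube $Q$ containing $x$, write $f=f_1+f_2$ with $f_1=f\chi_{2Q}$, and apply the identity
$$[b,L^{-\alpha/2}]f=(b-b_Q)L^{-\alpha/2}f-L^{-\alpha/2}((b-b_Q)f_1)-L^{-\alpha/2}((b-b_Q)f_2),$$
together with $e^{-t_Q L}$ acting on the same three pieces. This produces exactly the same five terms $I,II,III,IV,V$ as in the proof of Lemma \ref{8}, which I intend to bound in turn.

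For $I$, H\"{o}lder's inequality with exponents $r',r$ and John--Nirenberg give $I\leq C\|b\|_{BMO}M_r(L^{-\alpha/2}f)(x)$. For $III$, the input I need is an unweighted analog of Lemma \ref{7}, namely $\frac{1}{|Q|}\int_Q|e^{-t_Q L}((b-b_Q)g)(y)|dy\leq C\|b\|_{BMO}M_r(g)(x)$; this is obtained by repeating verbatim the annular decomposition in the proof of Lemma \ref{7}, using the Gaussian kernel bound $|p_{t_Q}(y,z)|\leq Ce^{-c2^{2(k-1)}}2^{(k+1)n}/|2^{k+1}Q|$, H\"{o}lder with exponent $r$, and the standard BMO estimate $|b_{2^{k+1}Q}-b_Q|\leq Ck\|b\|_{BMO}$ (no weight is needed). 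Setting $g=L^{-\alpha/2}f$ yields $III\leq C\|b\|_{BMO}M_r(L^{-\alpha/2}f)(x)$. For $II$, I will apply Kolmogorov's inequality together with the weak $(1,n/(n-\alpha))$ boundedness of $L^{-\alpha/2}$ (the remark following Lemma \ref{5}) to reduce to
$$II\leq \frac{C}{|Q|^{1-\alpha/n}}\int_{2Q}|b-b_{2Q}||f|\,dy,$$
and then H\"{o}lder with exponents $(rs)'$ and $rs$ combined with BMO gives $II\leq C\|b\|_{BMO}M_{\alpha,rs}(f)(x)$, since the powers of $|2Q|$ combine to exactly $1-\alpha/n$. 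Term $IV$ is handled identically to $II$ after noting $|p_{t_Q}(y,z)|\leq C|2Q|^{-1}$ on $Q\times 2Q$.

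For $V$, the difference kernel estimate of Lemma \ref{6} gives geometric decay $2^{-k}$ over the annuli $2^{k+1}Q\setminus 2^{k}Q$. I split $b-b_Q=(b-b_{2^{k+1}Q})+(b_{2^{k+1}Q}-b_Q)$: the first piece is treated exactly as in $II$ with $2Q$ replaced by $2^{k+1}Q$, producing $M_{\alpha,rs}(f)(x)$ after summing $\sum_k 2^{-k}<\infty$; the second piece uses $|b_{2^{k+1}Q}-b_Q|\leq Ck\|b\|_{BMO}$ and the trivial bound $M_{\alpha,1}(f)(x)\leq M_{\alpha,rs}(f)(x)$ (which follows from one application of H\"{o}lder with exponent $rs$), with convergence of $\sum_k k2^{-k}$. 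Summing $I+II+III+IV+V$ yields the asserted estimate. The only genuine obstacle is verifying the unweighted analog of Lemma \ref{7} needed for $III$; once that is in hand, every remaining step is a direct transcription of the Lemma \ref{8} argument with $\omega\equiv 1$, the two free H\"{o}lder exponents ($r$ in the terms pairing BMO with $L^{-\alpha/2}f$, and $rs$ in the terms pairing BMO with $f$) producing the two maximal functions $M_r$ and $M_{\alpha,rs}$ respectively.
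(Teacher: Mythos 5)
Your proposal is correct and follows exactly the route the paper intends: the paper's "proof" of this lemma simply cites Duong--Yan for $0<\alpha<1$ and asserts that the general case follows by repeating the steps of Lemma \ref{8} with the weight removed, which is precisely what you carry out (the exponent bookkeeping in term $II$, giving $|2Q|^{1/(rs)'-\,(1-\alpha/n)}$ and hence $M_{\alpha,rs}$, and the comparison $M_{\alpha,1}\leq M_{\alpha,rs}$ both check out under $rs<n/\alpha$). In effect you supply the details the paper omits, including the unweighted analogue of Lemma \ref{7} needed for term $III$, which indeed goes through verbatim with the John--Nirenberg inequality replacing the weighted $BMO(\omega)$ estimates.
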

\begin{proof}
The case $0<\alpha<1$ was proved by Duong and Yan (see \cite{duongyan} for details). The general case $0<\alpha<n$ follows by repeating the same steps as in Lemma \ref{8}. Since the main steps and the ideas are almost the same, here we omit the proof.
\end{proof}
\begin{lemma}(\cite{ks})\label{11}
If $0<\alpha<n, 1<p<n/\alpha, 1/q=1/p-\alpha/n,0<k<p/q$ and $\omega\in A_{p,q}$, then the fractional maximal operator
$M_{\alpha,1}$ is bounded from $L^{p,k}(\omega^p,\omega^q)$ to $L^{q,kq/p}(\omega^q)$.
\end{lemma}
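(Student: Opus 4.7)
The plan is to mimic the classical local/global decomposition argument that underlies every Morrey-type boundedness result for fractional operators. Fix an arbitrary cube $Q_0$; the goal is to show that
$$\left(\frac{1}{\omega^q(Q_0)^{kq/p}}\int_{Q_0}|M_{\alpha,1}f(x)|^q\omega^q(x)\,dx\right)^{1/q} \leq C\,\|f\|_{L^{p,k}(\omega^p,\omega^q)}.$$
Split $f=f_1+f_2$ with $f_1=f\chi_{2Q_0}$. By the sublinearity of $M_{\alpha,1}$ it suffices to handle the two pieces separately.

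For the local piece $f_1$, I would invoke the classical Muckenhoupt–Wheeden theorem: $\omega\in A_{p,q}$ implies that $M_{\alpha,1}$ is bounded from $L^p(\omega^p)$ to $L^q(\omega^q)$. This gives
$$\left(\int_{Q_0}|M_{\alpha,1}f_1|^q\omega^q\right)^{1/q}\leq C\left(\int_{2Q_0}|f|^p\omega^p\right)^{1/p}\leq C\,\omega^q(2Q_0)^{k/p}\,\|f\|_{L^{p,k}(\omega^p,\omega^q)}.$$
Since $\omega\in A_{p,q}$ forces $\omega^q\in A_\infty$ and hence doubling, we can replace $\omega^q(2Q_0)$ by $\omega^q(Q_0)$ up to a constant, producing exactly the required bound.

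For the global piece, note that for $x\in Q_0$ any cube $Q\ni x$ that hits $(2Q_0)^c$ must satisfy $Q\subset 2^{j+1}Q_0$ for some $j\geq 1$ with $|Q|\gtrsim |2^{j+1}Q_0|$. Therefore
$$M_{\alpha,1}f_2(x) \leq C\sum_{j=1}^{\infty}\frac{1}{|2^{j+1}Q_0|^{1-\alpha/n}}\int_{2^{j+1}Q_0}|f(y)|\,dy.$$
Applying H\"older with the pair $(p,p')$ and using the $A_{p,q}$ condition in the sharp form
$$\left(\int_Q\omega^{-p'}\right)^{1/p'}\leq C\,|Q|^{1-\alpha/n}\,\omega^q(Q)^{-1/q}\qquad(1/p'+1/q=1-\alpha/n),$$
together with the Morrey bound $(\int_{2^{j+1}Q_0}|f|^p\omega^p)^{1/p}\leq \omega^q(2^{j+1}Q_0)^{k/p}\,\|f\|_{L^{p,k}(\omega^p,\omega^q)}$, the factors of $|2^{j+1}Q_0|^{1-\alpha/n}$ cancel and one is left with
$$M_{\alpha,1}f_2(x)\leq C\,\|f\|_{L^{p,k}(\omega^p,\omega^q)}\sum_{j=1}^{\infty}\omega^q(2^{j+1}Q_0)^{k/p-1/q}.$$

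The main step — the one I expect to be the real obstacle — is summing this tail. Since $0<k<p/q$ gives $k/p-1/q<0$, the summand decreases when $\omega^q(2^{j+1}Q_0)$ grows, and the $A_\infty$ reverse-doubling estimate $\omega^q(Q_0)/\omega^q(2^{j+1}Q_0)\leq C\,2^{-(j+1)n\delta}$ makes the series geometric and convergent, yielding $M_{\alpha,1}f_2(x)\leq C\,\|f\|_{L^{p,k}(\omega^p,\omega^q)}\,\omega^q(Q_0)^{k/p-1/q}$. Integrating $|M_{\alpha,1}f_2|^q\omega^q$ over $Q_0$, raising to the $1/q$, and dividing by $\omega^q(Q_0)^{k/p}$ yields the desired Morrey bound, and combining with the local estimate finishes the proof. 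The only delicate point is verifying that the strict inequality $k<p/q$ together with the $A_\infty$ reverse-doubling exponent $\delta$ suffice to sum the series; this is where the hypothesis $0<k<p/q$ is used in an essential way.
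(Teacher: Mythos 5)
The paper does not prove this lemma; it is quoted verbatim from Komori--Shirai \cite{ks}, and your argument is in fact the standard proof given there: the local/global splitting $f=f_1+f_2$ with $f_1=f\chi_{2Q_0}$, the Muckenhoupt--Wheeden $L^p(\omega^p)\to L^q(\omega^q)$ bound plus doubling of $\omega^q$ for the local part, and H\"older with the $A_{p,q}$ condition in the form $\bigl(\int_Q\omega^{-p'}\bigr)^{1/p'}\le C|Q|^{1-\alpha/n}\omega^q(Q)^{-1/q}$ together with the $A_\infty$ comparison $\omega^q(Q_0)/\omega^q(2^{j+1}Q_0)\le C2^{-(j+1)n\delta}$ to sum the tail, where $k/p-1/q<0$ is exactly what makes the series geometric. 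Your proof is correct and takes essentially the same route as the cited source.
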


\begin{lemma}(\cite{ks})\label{12}
If $0<\alpha<n, 1<p<n/\alpha, 1/q=1/p-\alpha/n,0<k<p/q$ and $\omega\in A_{p,q}$, then the fractional maximal operator
$I_\alpha$ is bounded from $L^{p,k}(\omega^p,\omega^q)$ to $L^{q,kq/p}(\omega^q)$.
\end{lemma}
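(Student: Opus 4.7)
The plan is the standard local-global decomposition for fractional integrals on Morrey spaces. Fix an arbitrary cube $Q\subset R^n$ and split $f=f_1+f_2$ with $f_1=f\chi_{2Q}$ and $f_2=f\chi_{(2Q)^c}$. It suffices to prove, with constant independent of $Q$,
$$\Bigl(\frac{1}{\omega^q(Q)^{kq/p}}\int_Q|I_\alpha f(x)|^q\omega(x)^q\,dx\Bigr)^{1/q}\leq C\|f\|_{L^{p,k}(\omega^p,\omega^q)},$$
by handling $I_\alpha f_1$ and $I_\alpha f_2$ separately.

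For the local part, I would invoke the Muckenhoupt--Wheeden theorem, which asserts that $I_\alpha:L^p(\omega^p)\to L^q(\omega^q)$ is bounded under the hypothesis $\omega\in A_{p,q}$. Combined with the doubling of $\omega^q$ (which holds because $\omega\in A_{p,q}$ forces $\omega^q\in A_\infty$) and the Morrey bound $\int_{2Q}|f|^p\omega^p\leq \omega^q(2Q)^k\|f\|^p_{L^{p,k}(\omega^p,\omega^q)}$, this yields
$$\Bigl(\int_Q|I_\alpha f_1|^q\omega^q\Bigr)^{1/q}\leq C\Bigl(\int_{2Q}|f|^p\omega^p\Bigr)^{1/p}\leq C\omega^q(Q)^{k/p}\|f\|_{L^{p,k}(\omega^p,\omega^q)},$$
which, after dividing by $\omega^q(Q)^{k/p}=\omega^q(Q)^{(kq/p)/q}$, is exactly the local contribution to the required bound.

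For the global part I would start from the pointwise estimate, valid for $x\in Q$ since $|x-y|\approx 2^jr_Q$ for $y\in 2^{j+1}Q\setminus 2^jQ$,
$$|I_\alpha f_2(x)|\leq C\sum_{j=1}^\infty\frac{1}{|2^{j+1}Q|^{1-\alpha/n}}\int_{2^{j+1}Q}|f(y)|\,dy.$$
H\"older's inequality with exponents $p,p'$ and the $A_{p,q}$ condition (which, together with the identity $1/p'+1/q=1-\alpha/n$, gives $(\int_B\omega^{-p'})^{1/p'}\leq C|B|^{1-\alpha/n}/\omega^q(B)^{1/q}$) reduce each inner integral to a quantity controlled by $\omega^q(2^{j+1}Q)^{k/p-1/q}|2^{j+1}Q|^{1-\alpha/n}\|f\|_{L^{p,k}(\omega^p,\omega^q)}$, so each summand is $\leq C\omega^q(2^{j+1}Q)^{k/p-1/q}\|f\|_{L^{p,k}(\omega^p,\omega^q)}$. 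Since $k<p/q$ the exponent $k/p-1/q$ is strictly negative, and the reverse doubling estimate $\omega^q(2^{j+1}Q)\geq C\,2^{jn\delta}\omega^q(Q)$ coming from $\omega^q\in A_\infty$ makes the resulting series geometric with sum $C\omega^q(Q)^{k/p-1/q}\|f\|_{L^{p,k}(\omega^p,\omega^q)}$. Integrating over $Q$ against $\omega^q$ and normalizing by $\omega^q(Q)^{kq/p}$ produces a bound of the desired form. The main technical obstacle is precisely the bookkeeping in this last step: the strict inequality $k<p/q$ is used in an essential way to make the geometric series converge via reverse doubling, and one must be careful that the $A_{p,q}$ manipulation cleanly converts the $\omega^{-p'}$ integral into a negative power of $\omega^q$ matching the Morrey exponent on each annulus.
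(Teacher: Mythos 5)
Your proof is correct and is essentially the standard argument from the cited reference (Komori--Shirai); the paper itself offers no proof of this lemma, only the citation. The local--global splitting, the Muckenhoupt--Wheeden bound for the part on $2Q$, and the combination of the $A_{p,q}$ H\"older step with reverse doubling of $\omega^q\in A_\infty$ (where $k<p/q$ makes the exponent $k/p-1/q$ negative so the series over annuli is geometric) are exactly the ingredients of that proof.
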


\begin{lemma}(\cite{ks})\label{13}
If $1<p<\infty, 0<k<1$ and $\omega\in A_p,$ then $M$ is bounded on $L^{p,k}(\omega).$

\end{lemma}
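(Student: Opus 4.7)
The plan is the classical two-piece decomposition argument for Morrey-type boundedness. Fix a cube $Q$; I want to show
$$\frac{1}{\omega(Q)^k}\int_Q |Mf(x)|^p \omega(x)\,dx \leq C \|f\|_{L^{p,k}(\omega)}^p,$$
with $C$ independent of $Q$. To do this, I split $f = f_1 + f_2$ where $f_1 = f\chi_{2Q}$ and $f_2 = f\chi_{(2Q)^c}$, and control $Mf_1$ and $Mf_2$ separately on $Q$.

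For the local piece, Muckenhoupt's theorem says $M$ is bounded on $L^p(\omega)$ because $\omega \in A_p$. Hence
$$\int_Q |Mf_1|^p \omega\,dx \leq C\int_{2Q} |f|^p \omega\,dx \leq C\, \omega(2Q)^k \|f\|_{L^{p,k}(\omega)}^p,$$
and since $\omega \in A_p \subset A_\infty$ is doubling, $\omega(2Q) \leq C\omega(Q)$, which supplies the required $\omega(Q)^k$ factor.

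For the global piece, I use the geometric observation that any cube $Q' \ni x \in Q$ with $Q' \cap (2Q)^c \neq \emptyset$ satisfies $\ell(Q') \gtrsim \ell(Q)$, so $Q' \subset 2^{j+1}Q$ and $|Q'| \approx |2^{j}Q|$ for some $j \geq 1$. Writing $Q_j := 2^j Q$, this yields the pointwise bound
$$Mf_2(x) \leq C \sup_{j \geq 1} \frac{1}{|Q_j|}\int_{Q_j} |f(y)|\,dy.$$
Now apply H\"older's inequality with exponent $p$ and the $A_p$ condition on $Q_j$ to convert the unweighted average into a weighted $L^p$ norm:
$$\frac{1}{|Q_j|}\int_{Q_j} |f|\,dy \leq \frac{C}{\omega(Q_j)^{1/p}}\left(\int_{Q_j}|f|^p\omega\,dy\right)^{1/p} \leq C\,\omega(Q_j)^{(k-1)/p}\|f\|_{L^{p,k}(\omega)}.$$
Since $k < 1$, the exponent $(k-1)/p$ is negative and $\omega(Q_j)$ is increasing in $j$, so the supremum is attained at $j=1$ and (using doubling once more) is controlled by $C\,\omega(Q)^{(k-1)/p}\|f\|_{L^{p,k}(\omega)}$. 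Integrating $|Mf_2|^p\omega$ over $Q$ then yields precisely $C\,\omega(Q)^k \|f\|_{L^{p,k}(\omega)}^p$.

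Combining the two pieces and taking the supremum over $Q$ gives the lemma. The main obstacle, in my view, is tracking the powers of $\omega(Q_j)$ correctly: one must apply the $A_p$ hypothesis in the form that produces $\omega(Q_j)^{-1/p}$ as the conversion factor from unweighted averages to weighted $L^p$ norms on each dyadic dilate, and then exploit the negativity of $k-1$ to collapse the supremum over $j$ to its $j=1$ value without losing the exponent. All remaining steps, namely Muckenhoupt boundedness, doubling of $A_p$ weights, and the plain H\"older estimate, are standard.
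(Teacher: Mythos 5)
Your argument is correct: the decomposition $f=f\chi_{2Q}+f\chi_{(2Q)^c}$, Muckenhoupt's theorem on the local part, and the H\"older--$A_p$ conversion $\tfrac{1}{|Q_j|}\int_{Q_j}|f|\le C\,\omega(Q_j)^{-1/p}\bigl(\int_{Q_j}|f|^p\omega\bigr)^{1/p}$ on the tail, with the negativity of $(k-1)/p$ collapsing the supremum over dilates, together give exactly the required bound. The paper states this lemma without proof, citing Komori--Shirai, and your proof is essentially the argument given there.
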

\remark{By applying the same argument as in  Lemma \ref{5}, we know the conclusion in Lemma \ref{12} still hold for $L^{-{\alpha/ 2}}$. We omit the proof here. }
\remark{By checking the proof of Lemma \ref{11}, Lemma \ref{12}, Lemma \ref{13}, we know the three lemmas above still hold when $k=0$.}

\par Now we prove Theorem \ref{thm1.2}.
\begin{proof}
$(a)\Rightarrow (b):$  Since $ \omega^{rs}\in A_{p/rs,q/rs},$ then we get $ \omega^{q}\in A_{q/rs}$ and  $ \omega^{p}\in A_{p/rs}$.  Applying Lemma \ref{1},  Lemma \ref{10}, Lemma \ref{11}, Lemma \ref{12} and Lemma \ref{13}, we get
\begin{equation*}
\begin{split}
&||[b, L^{-{\alpha/ 2}}]f||_{L^{q,kq/p}(\omega^q)}\\
&\leq ||M^{\sharp}_L([b, L^{-\frac{\alpha}{2}}]f)||_{L^{q,kq/p}(\omega^q)}\\
&\leq C  ||b||_{BMO}\big( || M_{r}(L^{-\frac{\alpha}{2}}f)||_{L^{q,kq/p}(\omega^q)}+ ||  M_{\alpha, rs}(f)||_{L^{q,kq/p}(\omega^q)}\big)\\
&\leq C  ||b||_{BMO}\big( || L^{-\frac{\alpha}{2}}f||_{L^{q,kq/p}(\omega^q)}+||f||_{L^{p,k}(\omega^p,\omega^q)}\big)\\
&\leq C  ||b||_{BMO}||f||_{L^{p,k}(\omega^p,\omega^q)}.
\end{split}
\end{equation*}
In the last inequality, we used the fact $L^{-{\alpha/ 2}}$ is bounded from $L^{p,k}(\omega^p,\omega^q)$ to $L^{q,kq/p}(\omega^q)$(see remark 3.1).
\par
$(b)\Rightarrow (a):$    Let $L = -\Delta$ be the Laplacian on $R^n$,
then $L^{-{\alpha/ 2}}$ is the classical fractional
integral $I_\alpha$ and Let $k=0$ and weight $\omega\equiv 1,$  then $L^{p,k}(\omega^p,\omega^q)=L^p$ and $L^{q,kq/p}(\omega^q)=L^q.$
From \cite{ludingyan} we know the $(L^p, L^q)$ bounedness of $[b,I_\alpha]$ implies $b\in BMO$. Thus Theorem \ref{thm1.2} is proved.
\end{proof}


\begin{thebibliography}{}

\bibitem {coifman}R. Coifman and C. Fefferman, Weighted norm inequalities for maximal functions and singular integrals, Studia Math. \textbf{51}, 241-250(1974).

 \bibitem {chanillo}S. Chanillo, A note on  commutators, Indiana Univ. Math. J., \textbf{31}, 7-16(1982).

\bibitem {chiarenzafrasca}F. Chiarenza and M. Frasca, Morrey spaces and Hardy-Littlewood maximal function, Rend. Math. Appl. \textbf{7}, 273-279(1987).


\bibitem {duongyan}X. T. Duong and L. X. Yan, On commutators of fractional integral, Proc. Amer. Math. Soc. \textbf{132}, 3549-3557 (2004).

\bibitem {fefferman}C. Fefferman and E.M. Stein, $H^p$ spaces of several variables, Acta Math. 129, 137-193 (1972).
\bibitem {francia}J. Garc¨ªa-Cuerva, J.L. Rubio de Francia, Weighted Norm Inequalities and Related Topics (North-Holland, Amsterdam, 1985).
\bibitem {garcia}J. Garcia-Guerva, Weighted $H^p$ spaces, Dissertations Math, \textbf{162}, 1-63 (1979).

\bibitem {ks}Y. Komori and S. Shirai, Weighted Morrey spaces and a singular integral
operator, Math. Nachr, \textbf{282}, 219-231 (2009).

 \bibitem {ludingyan}S. Z. Lu, Y. Ding and D. Y. Yan, Singular integrals and related topics (World Scientific Publishing Company, Singapore, 2007).





\bibitem {martell}J. M. Martell, Sharp maximal funcitons associated with approximations of the identity in spaces of homogeneous type and applications, Studia Math., \textbf{161}, 113-145 (2004).
\bibitem{muckenhoupt}B. Muckenhoupt, Weighted norm inequalities for the Hardy maximal function, Trans. Amer. Math. Soc. \textbf{165}, 207-226 (1972).

\bibitem {molu}H. X. Mo and S. Z. Lu, Boundedness of multilinear commutators of generalized fractional
integrals, Math. Nachr. \textbf{281}, 1328-1340 (2008).


\bibitem {morrey}C. B. Morrey, On the solutions of quasi-linear elliptic partial differential equations, Trans. Amer. Math. Soc. \textbf{43}, 126-166  (1938).

\bibitem {Paluszynski}M. Paluszy\'{n}ski, Characterization of the Besov spaces via the commutator
operator of Coifman, Rochberg and Weiss, Indiana Univ. Math.
J., \textbf{44}, 1-17 (1995).



\bibitem {Gra1}S. Shirai, Necessary and sufficient conditions for boundedness of commutators of fractional
integral operators on classicalMorrey spaces, Hokkaido Math. J., \textbf{35},  683-696 (2006).


\bibitem {wang}H. Wang, On some commutator theorems for fractional integral operators on the weighted morrey spaces, arXiv:1010.2638v1 [math. CA] 13 Oct 2010.
\bibitem {ddsy}D. G. Deng, X. T. Duong, A. Sikora and L. X. Yan, Comparison of the classical BMO with the BMO spaces associated with
operators and applications, Rev. Mat. Iberoamericana, \textbf{24}, 267-296 (2008).


\end{thebibliography}
\end{document}